\setlist[enumerate]{leftmargin=*}
\newtheorem{theorem}{Theorem}[section]  
\newtheorem{proposition}[theorem]{Proposition}   
\newtheorem{lemma}[theorem]{Lemma}
\theoremstyle{remark}
\newtheorem{remark}[theorem]{Remark}
\numberwithin{equation}{section}
\newcommand{\NN}{\mathbb{N}}
\newcommand{\ZZ}{\mathbb{Z}} 
\newcommand{\RR}{\mathbb{R}}
\newcommand{\CC}{\mathbb{C}}
\newcommand{\TT}{\mathbb{T}}
\newcommand{\opL}{\mathcal{L}}
\newcommand{\Rz}{\mathcal{R}}
\newcommand{\prd}{\mathfrak{p}}
\newcommand{\scc}{\mathfrak{s}}
\newcommand{\cK}{\mathcal{K}}
\newcommand{\defeq}{\mathrel{:=}}
\newcommand{\eqdef}{\mathrel{=:}}
\renewcommand{\approx}{\asymp}
\newcommand{\tc}{\,:\,}
\title[Heat kernel and Riesz transform for the flow Laplacian]{Heat kernel and Riesz transform for the flow Laplacian on homogeneous trees}
\author[Alessio Martini]{Alessio Martini}
\address{Dipartimento di Scienze Matematiche ``G. L. Lagrange'', Politecnico di Torino, Corso Duca degli Abruzzi 24, 10129 Torino Italy}
\email{alessio.martini@polito.it}
\author[Federico Santagati]{Federico Santagati}
\address{Dipartimento di Scienze Matematiche ``G. L. Lagrange'', Politecnico di Torino, Corso Duca degli Abruzzi 24, 10129 Torino Italy}
\email{federico.santagati@polito.it}
\author[Maria Vallarino]{Maria Vallarino}
\address{Dipartimento di Scienze Matematiche ``G. L. Lagrange'', Politecnico di Torino, Corso Duca degli Abruzzi 24, 10129 Torino Italy}
\email{maria.vallarino@polito.it}
\keywords{Tree, nondoubling measure, heat kernel, Riesz transform}
\subjclass[2020]{05C05; 05C21; 42B20; 43A99}
\begin{document}
\begin{abstract}
Let $\TT_{q+1}$ denote the homogeneous tree of degree $q+1$ with the standard graph distance $d$ and the canonical flow measure $\mu$. The metric measure space $(\TT_{q+1},d,\mu)$ is of exponential growth. Let $\opL$ denote the flow Laplacian, which is a probabilistic Laplacian self-adjoint on $L^2(\mu)$. In this note, we prove some weighted $L^1$-estimates for the heat kernel associated with $\opL$ and its gradient. As a consequence, we show that the first order Riesz transform associated with the flow Laplacian on $\TT_{q+1}$ is bounded on $L^p(\mu)$, for $p \in (1,2]$ and of weak type $(1,1)$. The latter result was proved in a previous paper by Hebisch and Steger: we give a different proof that might pave the way to further generalizations.
\end{abstract}

\maketitle

\section{Introduction}
Let $\TT_{q+1}$ denote the homogeneous tree of degree $q+1$, i.e., a connected graph with no cycles where each vertex has $q+1$ neighbours, with $q\geq 2$. We identify $\TT_{q+1}$ with its set of vertices and equip it with the standard graph distance $d$, counting the number of edges along the shortest path connecting two vertices. We fix a reference point $o\in \TT_{q+1}$ and set $|x|\defeq d(x,o)$. A ray is a half infinite geodesic, with respect to the distance $d$, emanating from $o$, and the natural boundary $\Omega$ of $\TT_{q+1}$ is identified with the family of rays. We choose a \emph{mythical ancestor} $\omega_*\in \Omega$ and consider the horocyclic foliation it induces on the tree: for each vertex $x$ there exists a unique integer index $\ell(x)$, which we call the \emph{level} of $x$, indicating to which horocycle the vertex belongs. The level function is given by $\ell(x)=d(o,x\wedge \omega_*)-d(x,x\wedge \omega_*)$, where $x\wedge\omega_*$ denotes the closest point to $x$ on the ray $\omega_*$. For each vertex $x$ we define its predecessor $\prd(x)$ as the only neighbour of $x$ such that $\ell(\prd(x))=\ell(x)+1$, while $\scc(x)$ will denote the set of the remaining neighbours, the successors of $x$, whose level is $\ell(x)-1$. We introduce a partial order relation on $\TT_{q+1}$ by writing $x\geq y$ if $d(x,y)=\ell(x)-\ell(y)$. The canonical flow on $\TT_{q+1}$ is the positive function $\mu$ defined by 
\[
\mu(x) = q^{\ell(x)},\qquad x\in \TT_{q+1}. 
\]
This is called a flow because it satisfies the flow condition
\begin{equation}\label{eq: flow}
    \mu(x)=\sum_{y\in \scc(x)} \mu(y), \quad x\in \TT_{q+1}.
\end{equation}
Such flow is canonical in the sense that it equally distributes the mass of a vertex to its successors.

It is well known that the metric measure space $(\TT_{q+1},d,\mu)$ is of exponential growth, hence nondoubling and that the Cheeger isoperimetric property fails in this setting. Harmonic analysis in nondoubling settings presents major difficulties. In particular, extensions of the theory of singular integrals and of Hardy and BMO spaces have been considered on various metric measure spaces not satisfying the doubling condition, but fulfilling some measure growth assumptions or some geometric conditions, such as the isoperimetric property (see, e.g., \cite{CMM, MMV, NTV, T, To, Tol03, Ve}).

A systematic analysis on $(\TT_{q+1},d,\mu)$ was initiated in a remarkable paper by W.~Hebisch and T.~Steger \cite{hs}, where they developed an \emph{ad hoc} Calder\'on--Zygmund theory. Subsequently, in \cite{ATV2, ATV1, San} an atomic Hardy space $H^1(\mu)$ and a space $BMO(\mu)$ adapted to this setting were introduced and studied. Such theory was applied to study the boundedness properties of the spectral multipliers and the Riesz transform associated with a suitable Laplacian $\opL$, which we shall call the \emph{flow Laplacian}. To introduce it we need some more notation. 

We shall denote by $\CC^{\TT_{q+1}}$ the set of complex valued functions defined on $\TT_{q+1}$. For every function $f$ in $\CC^{\TT_{q+1}}$ we define its \emph{flow gradient} as
\begin{equation*}
 \nabla f(x)=   f(x) - f(\prd(x)), \qquad   x \in \TT_{q+1}.
\end{equation*}
The adjoint of $\nabla$ with respect to the flow measure $\mu$ is then given by 
\[
\nabla^* f(x) = f(x)-\frac{1}{q}\sum_{y\in \scc(x)}f(y), \qquad  f\in \CC^{\TT_{q+1}}, x \in \TT_{q+1}.
\]
The {\emph{flow Laplacian}} is defined by $\opL=\frac12 \nabla^* \nabla$ and is easily seen to be given by
\begin{equation}\label{flowlapdef}
    \opL f(x) = f(x)-\frac{1}{2\sqrt q}\sum_{y\sim x} \frac{\mu^{1/2}(y)}{\mu^{1/2}(x)}f(y), \qquad  f\in \CC^{\TT_{q+1}}, x\in \TT_{q+1},
\end{equation}
where $x\sim y$ means that $x$ and $y$ are neighbours. This is precisely the Laplacian studied in \cite{hs}. It is easily seen that the flow Laplacian can be expressed in terms of the combinatorial Laplacian as follows:
\begin{equation}\label{eq: relation laplacians}
    \opL= \frac{1}{1-b}\mu^{-1/2}(\Delta-bI)\mu^{1/2},
\end{equation} 
where $b= (\sqrt{q}-1)^2/(q+1)$ and
\[
  \Delta f(x)=f(x)-\frac{1}{q+1}\sum_{y\sim x} f(y), \qquad   f\in \CC^{\TT_{q+1}}, x\in \TT_{q+1}.  
\]
It is well known (see for instance \cite{CMS}) that $b$ is the bottom of the spectrum of $\Delta$ on $L^2(\#)$, where $\#$ denotes the counting measure on $\TT_{q+1}$. It immediately follows that $\opL$ has no spectral gap on $L^2(\mu)$. Indeed, the spectrum of $\opL$ is precisely $[0,2]$, see \cite[Remark 2.1]{hs}.

Equation \eqref{eq: relation laplacians} allows us to exploit some known formulas for the heat kernel of the combinatorial Laplacian to obtain estimates of the heat kernel of $\opL$. Let $H_t$ denote the integral kernel of the heat semigroup $e^{-t\opL}$, $t>0$. We can prove the following large-time weighted $L^1$-estimates for the heat kernel $H_t$ and its gradients.

\begin{theorem}\label{L1estimateHtgradHt}
For all $\varepsilon \geq 0$ there exists $C_\varepsilon>0$ such that, for all $t \geq 1$,
\begin{align}
&\sup_{y \in \TT_{q+1}} \sum_{x \in \TT_{q+1}} |H_t(x,y)| \, e^{\varepsilon d(x,y)/\sqrt{t}}\mu(x) \le {C_\varepsilon}, 
 \label{stimapesataHt} \\
&\sup_{y \in \TT_{q+1}} \sum_{x \in \TT_{q+1}} |\nabla_x H_t(x,y)| \, e^{\varepsilon d(x,y)/\sqrt{t}} \mu(x) \le \frac{C_\varepsilon}{\sqrt{t}}, 
\label{stimapesatanablaxHt} \\
&\sup_{y \in \TT_{q+1}} \sum_{x \in \TT_{q+1}} |\nabla_y H_t(x,y)| \, e^{\varepsilon d(x,y)/\sqrt{t}} \mu(x) \le \frac{C_\varepsilon}{\sqrt{t}}, 
\label{stimapesatanablayHt} \\
&\sup_{y \in \TT_{q+1}} \sum_{x \in \TT_{q+1}} |\nabla_y \nabla_x H_t(x,y)| \, e^{\varepsilon d(x,y)/\sqrt{t}} \mu(x) \le \frac{C_\varepsilon}{{t}}.
 \label{stimanablaynablaxHt}
\end{align}
Moreover, the constant in the estimates above does not depend on $q$.
\end{theorem}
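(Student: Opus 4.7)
The plan is to transfer the problem to the combinatorial heat kernel, for which sharp pointwise estimates are available. Equation~\eqref{eq: relation laplacians} yields at once
\[
H_t(x,y) = e^{bs}\,(\mu(x)\mu(y))^{-1/2}\,p_s(x,y), \qquad s=t/(1-b),
\]
where $p_s$ denotes the integral kernel of $e^{-s\Delta}$ with respect to the counting measure $\#$. Since $\Delta$ is a radial operator on $\TT_{q+1}$, we have $p_s(x,y)=h_s(d(x,y))$ for some function $h_s$, and the required analytic input is a pointwise estimate of the form
\[
h_s(n)\le C\,s^{-3/2}\,e^{-bs}\,(1+n)\,q^{-n/2}\,G_s(n),\qquad s\ge 1,
\]
where $G_s$ encodes a Gaussian-type decay in $n$, together with analogous bounds for the discrete radial difference $h_s(n)-q^{-1/2}h_s(n\pm 1)$ that gain a factor $t^{-1/2}$. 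Both can be extracted from a careful analysis of the spherical Fourier representation of $p_s$ on $\TT_{q+1}$.

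For \eqref{stimapesataHt} I would then split the sum over $x$ into spheres $S_n(y)=\{x\in\TT_{q+1}:d(x,y)=n\}$ around $y$. Classifying each $x\in S_n(y)$ according to the number of steps $k\in\{0,\ldots,n\}$ one must ascend from $y$ before descending toward $x$, a direct count gives
\[
\sum_{x\in S_n(y)}(\mu(x)/\mu(y))^{1/2}\le C(1+n)\,q^{n/2},
\]
with $C$ independent of $q$. Combining this with the pointwise bound on $h_s$, the $q^{\pm n/2}$ factors cancel exactly, and \eqref{stimapesataHt} reduces to a one-dimensional sum of the form $\sum_n(1+n)^2\,s^{-3/2}\,G_s(n)\,e^{\varepsilon n/\sqrt t}$ which, for $t\ge 1$, is bounded uniformly.

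For the gradient estimates I would exploit the identity
\[
\nabla_x H_t(x,y) = e^{bs}\,(\mu(x)\mu(y))^{-1/2}\,\bigl[h_s(d(x,y))-q^{-1/2}h_s(d(\prd(x),y))\bigr]
\]
(which uses $\mu(\prd(x))=q\mu(x)$) and its symmetric $y$-counterpart. The bracket is precisely the radial discrete derivative of $h_s$, so the same sphere-decomposition argument, combined with the derivative estimate on $h_s$, produces the additional $t^{-1/2}$ required by \eqref{stimapesatanablaxHt} and \eqref{stimapesatanablayHt}. Iterating in both variables yields the mixed bound \eqref{stimanablaynablaxHt}.

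The principal obstacle is establishing the pointwise estimates on $h_s$ and its discrete derivatives with constants \emph{independent of $q$}. Since $b\to 1$ as $q\to\infty$, the rescaled time $s=t/(1-b)$ blows up with $q$, so the spherical Fourier analysis has to be carried out tracking carefully the $q$-dependence of the bottom of the spectrum, the spherical functions and the Plancherel density. Once these uniform estimates are secured, the remainder of the proof is essentially an exercise in discrete calculus on the tree.
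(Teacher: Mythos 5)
There is a genuine gap, and it sits exactly where your proposal leans hardest. You assume, as your ``required analytic input'', pointwise bounds on the radial combinatorial kernel $h_s$ \emph{and} bounds on the radial differences $h_s(n)-q^{-1/2}h_s(n\pm1)$ that ``gain a factor $t^{-1/2}$''. In the descending direction this claimed gain is false: when $y \le x$, so that $d(\prd(x),y)=d(x,y)+1$, the subtracted term $q^{-1/2}h_s(d(x,y)+1)$ is smaller than $h_s(d(x,y))$ by a factor comparable to $q^{-1}$, so there is no cancellation and $|\nabla_x H_t(x,y)| \approx H_t(x,y)$ pointwise (this is precisely the second case of \eqref{s: stimagHt} in Lemma \ref{lem: gradient}, which carries no improvement over \eqref{s: stimaHt}). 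Consequently, feeding your pointwise bounds into the full sphere count $\sum_{x\in S_n(y)}(\mu(x)/\mu(y))^{1/2}\approx(1+n)q^{n/2}$ only reproduces the $O(1)$ bound \eqref{stimapesataHt} and cannot yield \eqref{stimapesatanablaxHt}. The paper recovers the missing $t^{-1/2}$ not from the pointwise estimate but from geometry: one splits the sum according to $y\le x$ or $y\not\le x$, and on the bad set uses the improved count \eqref{eq:weighted_sphere_rest}, namely $\sum_{x\in S_k(y)\tc y\le x}q^{[\ell(x)-\ell(y)]/2}\approx q^{k/2}$ (no factor $k+1$), which together with Proposition \ref{prop:Z} gives the gain. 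A cancellation-based pointwise bound is used only in the region $y\not\le x$, via the identity $h^\ZZ_t(j-1)-h^\ZZ_t(j+1)=\tfrac{2j}{t}h^\ZZ_t(j)$ of Proposition \ref{prop: cms}. Your ``iterating in both variables'' for \eqref{stimanablaynablaxHt} would inherit the same problem (and would in addition require second-difference pointwise bounds); the paper instead factorizes $\nabla_y\nabla_x H_t$ through the semigroup property at time $t/2$ and multiplies the two first-order $L^1$ estimates.

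A second, lesser issue is that your main analytic input is only asserted, not proved: you propose to extract the $q$-uniform bounds on $h_s$ from spherical Fourier analysis, and you yourself flag the $q$-dependence (through $s=t/(1-b)$) as the principal obstacle. The paper bypasses this entirely by quoting the exact Cowling--Meda--Setti formula (Proposition \ref{prop: cms}\ref{en:cmsi}), which, combined with \eqref{eq: htHt}, gives $H_t(x,y)=Q(x,y)J_t(d(x,y))$ with $J_t(d)\approx q^{-d/2}\tfrac{d+1}{t}h^\ZZ_t(d+1)$: the time rescaling and the factor $e^{bt/(1-b)}$ cancel exactly, all $q$-dependence is in the explicit powers $q^{-d/2}$, and the remaining one-dimensional estimates for $h^\ZZ_t$ (Proposition \ref{prop:Z}) are $q$-free. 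Also note that a genuinely Gaussian factor $G_s(n)$ is not available here (see the paper's remark after Proposition \ref{prop:Z}); what holds, and suffices, is the dichotomy exponential/Gaussian of \eqref{stimanuova}. To repair your argument you would need to (i) replace the spherical-analysis input by the CMS formula or prove the $q$-uniform bounds in full, and (ii) add the case split $y\le x$ versus $y\not\le x$ together with the restricted sphere count, which is the actual mechanism behind the gradient estimates.
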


The previous estimates are a key ingredient to study boundedness properties of the first order Riesz transform associated with $\opL$, which is defined by $\Rz=\nabla \opL^{-1/2}$; as usual, fractional powers of the Laplacian are defined by means of the Spectral Theorem. We shall provide a new proof of the following result. 

\begin{theorem}\label{t: teoRiesz}
The Riesz transform $\Rz$ is of weak type $(1,1)$, bounded from $H^1(\mu)$ to $L^1(\mu)$, and bounded on $L^p(\mu)$ for all $p\in (1,2]$. 
\end{theorem}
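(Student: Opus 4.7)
The plan is to express $\Rz$ as an integral against the heat semigroup, reduce the required off-diagonal bounds to Theorem \ref{L1estimateHtgradHt}, and then invoke the Calder\'on--Zygmund theory of \cite{hs} together with the atomic $H^1(\mu)$ theory of \cite{ATV1, ATV2, San}. The $L^2$-boundedness is automatic from the spectral identity $\|\Rz f\|_2^2 = \langle \nabla^*\nabla\, \opL^{-1/2} f,\opL^{-1/2} f\rangle = 2\|f\|_2^2$, valid on the dense subspace where $\opL^{-1/2}f$ is defined and extended to $L^2(\mu)$ by continuity; and the $L^p$-boundedness for $p\in(1,2)$ will follow by Marcinkiewicz interpolation once weak type $(1,1)$ is in hand.

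The starting point for the endpoint estimates is the subordination formula $\opL^{-1/2} = \pi^{-1/2}\int_0^\infty e^{-t\opL}\,t^{-1/2}\,dt$, which yields the kernel representation
\[
R(x,y) = \frac{1}{\sqrt{\pi}} \int_0^\infty \nabla_x H_t(x,y)\,\frac{dt}{\sqrt{t}}.
\]
Both the Hebisch--Steger weak $(1,1)$ criterion and the atomic description of $H^1(\mu)$ through admissible trapezoids $R$ reduce the two endpoint bounds to a H\"ormander-type integrability condition
\[
\sup_{y,y'\in R}\,\sum_{x\notin R^*} |R(x,y)-R(x,y')|\,\mu(x) \le C,
\]
uniform over all admissible trapezoids $R$ and a fixed enlargement $R^*$. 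Telescoping along a geodesic joining $y$ to $y'$ rewrites the kernel difference as a sum of mixed second differences, so the key quantity to control becomes
\[
\int_0^\infty \Bigl(\sum_{x\notin R^*} |\nabla_y \nabla_x H_t(x,z)|\,\mu(x)\Bigr) \frac{dt}{\sqrt{t}}
\]
uniformly in $z$ on such a geodesic.

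I would split the $t$-integral at a threshold $T$ of order the squared diameter of $R$. In the short-time range $t\le 1$ one can use elementary bounds for the discrete heat kernel: since $\opL$ has operator norm at most $2$ on $L^2(\mu)$ and propagation speed one, the power series expansion of $e^{-t\opL}$ gives rapid decay of $H_t(x,y)$ and of its differences in the regime $d(x,y)\gg\sqrt{t}$. In the long-time range $t\ge 1$ I would invoke Theorem \ref{L1estimateHtgradHt}: the weighted estimate \eqref{stimanablaynablaxHt} with $\varepsilon$ chosen large enough yields a factor $e^{-\varepsilon r/\sqrt{t}}$ once the summation is restricted to $d(x,y)\ge r$, and the improved $t^{-1}$ (versus $t^{-1/2}$) decay is precisely what is needed to integrate against $dt/\sqrt{t}$.

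The main obstacle is exactly the long-time regime: a naive use of \eqref{stimapesatanablaxHt} alone produces a logarithmically divergent $t$-integral, and overcoming this requires both the sharper $t^{-1}$ decay obtained from the mixed gradient in \eqref{stimanablaynablaxHt} and the exponential localization provided by the weight $e^{\varepsilon d(x,y)/\sqrt{t}}$. Combined with the geometric control afforded by admissible trapezoids, whose diameter is tied to their level width in a manner compatible with the exponential volume growth of $\mu$, these two ingredients should make the H\"ormander integrability condition hold uniformly, giving the weak $(1,1)$ and the $H^1\to L^1$ bounds simultaneously, and hence via interpolation the full statement of Theorem \ref{t: teoRiesz}.
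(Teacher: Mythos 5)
Your outline is correct in its main ingredients and reaches the same endpoint results, but it organizes the final step differently from the paper. Both arguments share the $L^2$-boundedness via $\|\opL^{1/2}f\|_{L^2(\mu)}=2^{-1/2}\|\nabla f\|_{L^2(\mu)}$, the subordination formula for $R(x,y)$, and the weighted estimates of Theorem \ref{L1estimateHtgradHt} as the analytic core; the difference is how the Calder\'on--Zygmund theory of \cite{hs} is invoked. You propose to verify the integral H\"ormander condition for the \emph{full} kernel directly, uniformly over admissible trapezoids, splitting the $t$-integral at $\mathrm{diam}(R)^2$ and trading the $t^{-1}$ decay of \eqref{stimanablaynablaxHt} against the exponential localization; this works, but it forces you to (i) use the geometry of the trapezoids and their enlargements $R^*$ (in particular $d(R,\complement R^*)\gtrsim \mathrm{diam}(R)$, and the triviality of singleton trapezoids), and (ii) supply small-time bounds that Theorem \ref{L1estimateHtgradHt} does not give, since it only covers $t\ge 1$. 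The paper avoids both issues: it splits off the $t\in(0,1)$ part as a separate operator $\Rz^{(0)}$, which is bounded on every $L^p(\mu)$, $p\in[1,\infty]$, simply because $\|\nabla_x H_t(\cdot,y)\|_{L^1(\mu)}\le 2$ by \eqref{contractive} and $\int_0^1 t^{-1/2}dt<\infty$ (Lemma \ref{lem:localriesz}), and it decomposes the remaining kernel dyadically in time, $K_n=\pi^{-1/2}\int_{2^n}^{2^{n+1}}t^{-1/2}\nabla_x H_t\,dt$, checking the scale-indexed hypotheses \eqref{Kn}--\eqref{Knseconda} of Proposition \ref{lemmahebisch} (via the gradient form of Remark \ref{rem:cz_gradient}) with $c^n=2^{-n/2}$; the abstract theorem of \cite{hs, ATV1, LSTV} then yields the weak $(1,1)$, $L^p$ for $p\in(1,2]$ and $H^1(\mu)\to L^1(\mu)$ bounds all at once, with no explicit trapezoid geometry and no logarithmic-divergence issue, since the balancing of \eqref{Kn} against \eqref{Knseconda} across scales is done inside that theorem. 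Two points in your write-up need repair if you pursue the direct route: the continuous-time semigroup $e^{-t\opL}$ does \emph{not} have finite propagation speed, so the phrase ``propagation speed one'' is not a valid justification; what is true (and sufficient) is that $e^{-t\opL}=e^{-t}\sum_k t^k(I-\opL)^k/k!$ with $(I-\opL)^k$ supported in $d(x,y)\le k$, giving Poisson-type tails $\lesssim t^{d(x,y)}/d(x,y)!$ for $t\le 1$ --- or, more simply, you can adopt the paper's splitting and dispense with small-time decay altogether. Likewise the geometric claim about admissible trapezoids should be stated and checked (it is true, with constants, in the \cite{hs} framework) rather than only alluded to. With those repairs your scheme is a legitimate alternative proof; the paper's dyadic formulation is shorter precisely because Proposition \ref{lemmahebisch} packages the trapezoid geometry once and for all.
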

 
The weak type $(1,1)$ and the $L^p$-boundedness of $\Rz$ for $p\in (1,2]$ were first proved in \cite[Theorem 2.3]{hs}: such proof was based on spherical analysis on the homogeneous tree. The proof we give in Section \ref{s: proof} below is different and more similar to the one usually given in the study of the boundedness of Riesz transforms on Lie groups and Riemannian manifolds: it is based on a suitable weighted $L^1$-estimate of the gradient of the heat kernel of the flow Laplacian, which allows us to prove that the integral kernel of $\Rz$ satisfies an appropriate integral H\"ormander's condition adapted to this setting. As shown in \cite{ATV1,LSTV}, the same integral H\"ormander's condition also implies the $H^1(\mu) \to L^1(\mu)$ boundedness.

We point out that, in recent joint work with M.~Levi and A.~Tabacco \cite{LMSTV}, we proved that $\Rz$ is also $L^p(\mu)$-bounded for $p \in (2,\infty)$, but is unbounded from $L^{\infty}(\mu)$ to $BMO(\mu)$, and therefore the integral kernel of the adjoint operator $\Rz^*$ does not satisfy the aforementioned integral H\"ormander's condition. An  endpoint result for $p=\infty$ for the Riesz transform $\Rz$ is still an open problem. 

\medskip

The definition of Riesz transform depends on a notion of gradient on graphs, which is not unambiguous in the literature. Many authors, including Hebisch and Steger in \cite{hs}, define the ``modulus of the gradient'' of a function $f \in \CC^{\TT_{q+1}}$ as the vertex function
\begin{equation*}
  D f(x)=\sum_{y \sim x}|f(x)-f(y)|, \qquad  x \in \TT_{q+1},
\end{equation*}
and consequently the ``modulus of the Riesz transform'' as the sublinear operator $D \opL^{-1/2}$. However, the relevant boundedness properties of the Riesz transform $\Rz = \nabla \opL^{-1/2}$ considered here are equivalent to those of the operator $D\opL^{-1/2}$ studied in \cite{hs} (see \cite[Proposition 2.2]{LMSTV}).

\medskip

The boundedness of Riesz transforms on graphs has been the object of many investigations in recent years. In \cite{BaRu, fe, Ru2, Ru} the authors obtained various boundedness results for Riesz transforms on graphs satisfying the doubling condition and some additional conditions, expressed either in terms of properties of the measure or estimates for the heat kernel. In \cite{CeMe} D.~Celotto and S.~Meda showed that the Riesz transform associated with the combinatorial Laplacian is bounded from a suitable Hardy type space to $L^1$ on graphs with the Cheeger isoperimetric property. In the recent paper \cite{CCH} the authors obtained the $L^p$-boundedness of the Riesz transform for the so-called bounded Laplacians on any weighted graph and any $p\in (1,\infty)$; however, the latter results are proved only under the assumption of positive spectral gap. We remark once again that $(\TT_{q+1},d,\mu)$ is nondoubling and does not satisfy the Cheeger isoperimetric property. Moreover, the flow Laplacian $\opL$ does not have spectral gap. Hence, none of the above-mentioned results may be applied in our case and we will prove the boundedness of the Riesz transform $\Rz$ by exploiting the Calder\'on--Zygmund theory developed in \cite{hs} and using estimate of the gradient of the heat kernel of $\opL$. 

\medskip

In what follows, we write $\NN$ for the set of natural numbers, including zero, and $\NN_+$ for the set of positive integers $\NN \setminus \{0\}$.
We use the standard notation $f_1(x)\lesssim f_2(x)$ to indicate
that there exists a positive constant $C$, independent from the variable $x$ but possibly depending on
other parameters, such that $f_1(x) \le Cf_2(x)$ for every $x$. When both $f_1(x)\lesssim f_2(x)$ and $f_2(x) \lesssim f_1(x)$ are valid, we will write $f_1(x) \approx f_2(x)$.

\section{Heat kernel estimates}
Let $e^{-t\Delta}$ and $e^{-t\opL}$ be the heat semigroups of the combinatorial Laplacian $\Delta$ and of the flow Laplacian $\opL$ on $\TT_{q+1}$, respectively. We shall denote by $h_t$ and $H_t$ the heat kernels on the respective measure spaces on which the generators are self-adjoint and bounded, i.e.,
\begin{equation*}
 e^{-t\Delta} f(x)=\sum_{y \in \TT_{q+1}} h_t(x,y)f(y), \quad e^{-t\opL} f(x)=\sum_{y \in \TT_{q+1}}H_t(x,y)f(y)\mu(y),
\end{equation*}
for every $x \in \TT_{q+1}$.
By the Spectral Theorem and \eqref{eq: relation laplacians}, we obtain the following relation between the combinatorial and the flow heat semigroups,
\begin{equation}\label{eq: htHt}
   e^{-t\opL}  = \mu^{-1/2}e^{b t/(1-b)}  e^{-t/{(1-b)}\Delta}\mu^{1/2}.
\end{equation}

Observe that, when $q=1$, $H_t=h_t \eqdef h^{\ZZ}_{t}$, which is the heat kernel of the combinatorial Laplacian of $\ZZ$. We shall always assume $q\geq 2$, but we will make an extensive instrumental use of the heat kernel on $\ZZ$. It is well known that $h^{\ZZ}_{t}$ is radial and that the function $h^{\ZZ}_{t}(j)\defeq h^{\ZZ}_{t}(j,0)$ is decreasing in $j\in \NN$. 
We will use several times the fact that
\begin{equation}\label{contractive}
    \|H_t(\cdot, y)\|_{L^1(\mu)}=1  \ \ \ \forall y \in \TT_{q+1},\ \qquad  \mathrm{and} \qquad \qquad \| h_t^{\ZZ}\|_{\ell^1(\ZZ)}=1.
\end{equation}

In the next proposition we collect some fundamental results by M.~Cowling, S.~Meda and A.~G.~Setti \cite[Theorem 2.3, Lemma 2.4, Proposition 2.5]{CMS} providing exact expressions for $h_t$ and the 2-step gradient of $h^{\ZZ}_{t}$.

\begin{proposition}\label{prop: cms} The following hold for all $t>0$, $x,y \in \TT_{q+1}$, $j\in \ZZ$:
\begin{enumerate}[label=(\roman*)]
\item\label{en:cmsi} $\displaystyle h_t(x,y)= \frac{2e^{-bt}}{(1-b)t}q^{-d(x,y)/2} \sum_{k=0}^\infty q^{-k}(d(x,y)+2k+1) h^{\ZZ}_{t(1-b)}(d(x,y)+2k+1))$;
\item\label{en:cmsii} $\displaystyle h^{\ZZ}_t(j-1)-h^{\ZZ}_t(j+1)=\frac{2j}{t}h^{\ZZ}_t(j)$.
\end{enumerate}
\end{proposition}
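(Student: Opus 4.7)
The proposition collects results from \cite{CMS}, so one option is simply to quote them; I instead sketch self-contained derivations of both identities, treating (ii) first and then bootstrapping to (i) via spherical Fourier analysis on the tree. For (ii), the natural tool is Fourier inversion on the torus. Since $\Delta$ on $\ZZ$ is convolution by $\delta_0-\tfrac12(\delta_{-1}+\delta_1)$, its heat kernel reads
\[
    h^{\ZZ}_t(j)=\frac{1}{2\pi}\int_{-\pi}^{\pi} e^{-t(1-\cos\theta)} e^{ij\theta}\,d\theta.
\]
Expanding $h^{\ZZ}_t(j-1)-h^{\ZZ}_t(j+1)$ produces the factor $e^{-i\theta}-e^{i\theta}=-2i\sin\theta$ inside the integral, and the identity
\[
    e^{-t(1-\cos\theta)}\sin\theta=-\frac{1}{t}\frac{d}{d\theta}\bigl[e^{-t(1-\cos\theta)}\bigr]
\]
allows one to integrate by parts in $\theta$: the boundary terms vanish by periodicity, and the derivative of $e^{ij\theta}$ contributes the factor $ij$, producing exactly $\tfrac{2j}{t}h^{\ZZ}_t(j)$. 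Equivalently, (ii) is the classical three-term recurrence $I_{j-1}(t)-I_{j+1}(t)=\tfrac{2j}{t}I_j(t)$ for modified Bessel functions, combined with the identification $h^{\ZZ}_t(j)=e^{-t}I_{|j|}(t)$.

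For (i), I would rely on the spherical Fourier analysis of $\TT_{q+1}$. The kernel $h_t(x,y)$ is a function of $d(x,y)$ alone by transitivity of $\mathrm{Aut}(\TT_{q+1})$, and the radial spherical functions $\phi_s$ diagonalize $\Delta$ with eigenvalues $\gamma(s)=1-(1-b)\cos(\tau s)$, where $\tau=\log q$; this follows from a direct substitution of $\phi_s(n)\approx q^{-(1/2+is)n}$ into the radial recurrence $\Delta f(n)=f(n)-\tfrac{1}{q+1}(f(n-1)+qf(n+1))$, using the identity $1-b=2\sqrt{q}/(q+1)$. Hence the heat symbol factorizes as $e^{-t\gamma(s)}=e^{-bt}\,e^{-t(1-b)(1-\cos(\tau s))}$, whose second factor is precisely the Fourier symbol of $h^{\ZZ}_{t(1-b)}$ on $\ZZ$. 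Inverting via the Plancherel formula on $\TT_{q+1}$ and expanding the Harish-Chandra-type $c$-function as a geometric series in $q^{-1}$ generates the summation index $k$, while the Macdonald-type formula for $\phi_s(n)$ supplies the $q^{-n/2}$ prefactor. The prefactor $\tfrac{2}{(1-b)t}$ then arises by invoking (ii) to rewrite each difference $h^{\ZZ}_{t(1-b)}(n+2k)-h^{\ZZ}_{t(1-b)}(n+2k+2)$ as $\tfrac{2(n+2k+1)}{t(1-b)}h^{\ZZ}_{t(1-b)}(n+2k+1)$ and telescoping.

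The main obstacle is the bookkeeping in (i): tracking the interplay between the geometric expansion of the $c$-function and the Fourier representation of $h^{\ZZ}_{t(1-b)}$, and correctly identifying the telescoping that yields the precise coefficients $(n+2k+1)$ and the weights $q^{-k}$. A cleaner alternative, which bypasses the spherical inversion, is to verify the formula a posteriori: one shows that the right-hand side of (i), viewed as a function of $t>0$ and $x\in\TT_{q+1}$ with $y$ fixed, satisfies the heat equation $\partial_t u = -\Delta_x u$ (using (ii) to handle the $t$-derivative and the radial recurrence for $\Delta$ for the spatial part), and then checks that it converges to $\delta_{x,y}$ as $t\to 0^+$; uniqueness of the heat kernel then closes the argument.
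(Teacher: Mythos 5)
The paper does not prove Proposition~2.4 at all: it simply cites it as a bundle of results from Cowling--Meda--Setti \cite[Theorem~2.3, Lemma~2.4, Proposition~2.5]{CMS}. So there is no ``paper proof'' to match; what you have written is supplementary.

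Your argument for \ref{en:cmsii} is correct and complete. The Fourier representation $h^{\ZZ}_t(j)=\tfrac{1}{2\pi}\int_{-\pi}^{\pi}e^{-t(1-\cos\theta)}e^{ij\theta}\,d\theta$ is right for the Laplacian $I-\tfrac12(\delta_{-1}+\delta_1)*$ on $\ZZ$, the identity $e^{-t(1-\cos\theta)}\sin\theta=-\tfrac1t\tfrac{d}{d\theta}e^{-t(1-\cos\theta)}$ is correct, and the integration by parts gives exactly the factor $2j/t$. The Bessel reformulation $h^{\ZZ}_t(j)=e^{-t}I_{|j|}(t)$ together with $I_{j-1}-I_{j+1}=\tfrac{2j}{t}I_j$ is the same fact in different clothing; either route is fine.

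For \ref{en:cmsi} you give a roadmap rather than a proof: the identification $1-b=2\sqrt q/(q+1)$, the eigenvalue $\gamma(s)=1-(1-b)\cos(\tau s)$ for the radial spherical function, and the factorization $e^{-t\gamma(s)}=e^{-bt}e^{-t(1-b)(1-\cos\tau s)}$ are all correct and are precisely the starting point of the argument in \cite{CMS}. But the remaining steps --- spherical Plancherel inversion, expansion of the Harish-Chandra $c$-function as a geometric series to produce the index $k$ and the weights $q^{-k}$, and the Abel/telescoping manipulation that turns differences $h^{\ZZ}_{t(1-b)}(n+2k)-h^{\ZZ}_{t(1-b)}(n+2k+2)$ into the terms $(n+2k+1)h^{\ZZ}_{t(1-b)}(n+2k+1)$ via \ref{en:cmsii} --- are exactly where the constants live, and you acknowledge but do not execute them. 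Note in particular that $\sum_{k\geq 0}q^{-k}(a_{2k}-a_{2k+2})$ does not telescope to a single term: an Abel summation leaves a residual series, so this step needs to be written out carefully to recover the exact coefficients. Your alternative (verify that the right-hand side solves the heat equation on the tree with the correct initial datum and invoke uniqueness) is a legitimate and arguably cleaner strategy, but again is only proposed, not carried out. In short: \ref{en:cmsii} is proved; \ref{en:cmsi} is a credible sketch in the spirit of \cite{CMS} but still relies on that reference to close the gap.
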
 
By means of Proposition \ref{prop: cms} \ref{en:cmsi} and \eqref{eq: htHt}, we can express the heat kernel of $\opL$ as
\begin{equation}\label{eq: H=qJ}
    H_t(x,y)= q^{-\ell(x)/2} e^{b t/(1-b)} h_{t/{(1-b)}}(x,y) q^{-\ell(y)/2 } = Q(x,y) \, J_t(d(x,y)),
\end{equation} for every $x,y\in \TT_{q+1}$, where 
\[
    Q(x,y)=q^{-(\ell(y)+\ell(x))/2},
\]
and
\begin{equation}\label{10}
   J_t(d)=\frac{2}{t} \sum_{k=0}^\infty q^{-(2k+d)/2}(d+2k+1)h^{\ZZ}_{t}(d+2k+1), \qquad d \in \NN.
\end{equation}
As $h_t^\ZZ$ is nonnegative and decreasing on $\NN$, it is easy to see that the previous series is comparable to its first summand, that is,
\begin{equation}\label{eq: approx1}
    J_t(d)\approx   q^{-d/2} \frac{d+1}{t}h^{\ZZ}_{t}(d+1) \qquad \forall d \in \NN,
\end{equation}
and the implicit constants in \eqref{eq: approx1} do not depend on $q$.

\subsection{Weighted estimates of \texorpdfstring{$h^{\ZZ}_t$}{htZ}}
In this subsection, we shall prove weighted uniform and $\ell^1$-estimates for the heat kernel associated to the combinatorial Laplacian on $\ZZ$. To do so, we need the following technical lemma. 

\begin{lemma}\label{l: stimaphi}
Let $\varphi: \RR^+ \to \RR$ be the function defined by 
\begin{equation}\label{fi}
    \varphi(x)=-x+(1+x^2)^{1/2}+\log \bigg(\frac{x}{1+\sqrt{1+x^2}}\bigg), \qquad x>0.
\end{equation}
Then  
\begin{equation}\label{eq:unif_log_est}
\varphi(x)\le \log x +1-\log 2,\qquad \forall x>0.
\end{equation}
Moreover, for every $t_0>0$ there exists a positive constant $C_0$ depending on $x_0$ such that 
\begin{equation}\label{eq:decay_est}
\varphi(x)\le  -\frac{C_0}{x} \qquad \forall x\ge x_0.
\end{equation}
\end{lemma}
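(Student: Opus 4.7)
\emph{Proof plan.} For the uniform bound \eqref{eq:unif_log_est}, I would split $\varphi$ via the identity
\[
\varphi(x) = \bigl(-x + \sqrt{1+x^2}\bigr) + \log x - \log\bigl(1 + \sqrt{1+x^2}\bigr)
\]
and estimate the two non-logarithmic terms separately. The inequality $\sqrt{1+x^2} \le 1 + x$ (for $x \geq 0$, verified by squaring) gives $-x + \sqrt{1+x^2} \leq 1$, while $\sqrt{1+x^2} \geq 1$ gives $\log(1+\sqrt{1+x^2}) \geq \log 2$. Substituting these two inequalities into the identity immediately yields $\varphi(x) \leq \log x + 1 - \log 2$.

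For the decay estimate \eqref{eq:decay_est}, the plan is to show that $-x\varphi(x)$ is bounded below by a positive constant on $[x_0, \infty)$. The key computation is that of the derivative: using the rationalization $x/(1+\sqrt{1+x^2}) = (\sqrt{1+x^2}-1)/x$ to simplify the chain rule terms, one obtains
\[
\varphi'(x) = \frac{\sqrt{1+x^2}}{x} - 1 = \frac{1}{x\bigl(x + \sqrt{1+x^2}\bigr)} > 0, \qquad x > 0.
\]
Hence $\varphi$ is strictly increasing on $(0, \infty)$; since also $\lim_{x \to \infty} \varphi(x) = 0$ (which follows from $\sqrt{1+x^2} - x \to 0$ and $x/(1+\sqrt{1+x^2}) \to 1$), we deduce $\varphi < 0$ throughout $(0, \infty)$.

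To extract the precise decay rate $1/x$, I would integrate $\varphi'$ from $x$ to infinity. Using $\sqrt{1+t^2} \leq 1 + t \leq 2t$ for $t \geq 1$, we have $t + \sqrt{1+t^2} \leq 3t$, hence $\varphi'(t) \geq 1/(3 t^2)$, and therefore
\[
-\varphi(x) = \int_x^{\infty} \varphi'(t) \, dt \geq \frac{1}{3x}, \qquad x \geq 1.
\]
This establishes \eqref{eq:decay_est} for $x \geq \max(x_0, 1)$. The remaining range $x_0 \leq x < 1$ (relevant only if $x_0 < 1$) is handled by compactness: $\varphi$ is continuous and strictly negative on the compact set $[x_0, 1]$, hence admits a strictly negative maximum $-m$, giving $\varphi(x) \leq -m \leq -m x_0/x$ there. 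Taking $C_0 := \min(1/3, m x_0)$ yields a single constant that works for all $x \geq x_0$. The only mildly delicate step I foresee is the simplification of $\varphi'(x)$; everything else is a routine monotonicity-and-integration argument.
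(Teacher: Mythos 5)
Your proof is correct. For the uniform bound \eqref{eq:unif_log_est} you argue exactly as the paper does (the paper writes $-x+\sqrt{1+x^2}=\frac{1}{x+\sqrt{1+x^2}}\le 1$, which is the same estimate as your $\sqrt{1+x^2}\le 1+x$). For the decay estimate \eqref{eq:decay_est} you take a genuinely different route: the paper simply applies $\log u\le u-1$ to the logarithmic term and combines it with the rationalized first term to get, in one algebraic step, the closed-form bound
\begin{equation*}
\varphi(x)\le -\frac{x}{\bigl(x+\sqrt{1+x^2}\bigr)\bigl(1+\sqrt{1+x^2}\bigr)},
\end{equation*}
from which $\varphi(x)\le -C_0/x$ on $[x_0,\infty)$ is immediate for all $x>0$ at once. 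You instead differentiate, obtaining $\varphi'(x)=\frac{\sqrt{1+x^2}}{x}-1=\frac{1}{x(x+\sqrt{1+x^2})}$ (I checked this simplification; it is correct), note that $\varphi(x)\to 0$ as $x\to\infty$, and integrate the lower bound $\varphi'(t)\ge \frac{1}{3t^2}$ on $[1,\infty)$, handling $[x_0,1]$ by compactness and negativity of $\varphi$. Both arguments are sound and give the same $1/x$ decay; the paper's is shorter and yields an explicit pointwise bound valid on all of $(0,\infty)$ with no case distinction, whereas your monotonicity-and-integration argument is more systematic and additionally records the useful facts that $\varphi$ is strictly increasing and negative, at the cost of the extra limit computation and the compactness step.
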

\begin{proof}
We can easily see that
\[
\varphi(x)=\frac{1}{x+\sqrt{1+x^2}}+\log x-\log( 1+\sqrt{1+x^2})\leq 1+\log x-\log 2,\qquad \forall x>0,
\]
which gives \eqref{eq:unif_log_est}.
Moreover, as $\log(1+ s) \le s$ for $s>-1$, we deduce that 
\begin{align*}
    \varphi(x)&  \le \frac{1}{x+\sqrt{1+x^2}}+\frac{x}{1+\sqrt{1+x^2}}-1 \\ 
    &=-\frac{x}{(x+\sqrt{1+x^2})(1+\sqrt{1+x^2})},
\end{align*}
whence \eqref{eq:decay_est} follows immediately.
\end{proof}

\begin{proposition}\label{prop:Z}
For all $\varepsilon \geq 0$ there exists $C_\varepsilon>0$ such that, for all $t \geq 1$,
\begin{align}
\label{LinfZeta}
  \sup_{n \in \ZZ} e^{\varepsilon |n|/\sqrt{t}} h^{\ZZ}_t(n) &\leq \frac{C_{\varepsilon}}{\sqrt{t}} , \\
\label{L1Zeta}
  \sum_{n \in \ZZ} e^{\varepsilon |n|/\sqrt{t}} h^{\ZZ}_t(n) &\leq C_{\varepsilon} .
\end{align}
\end{proposition}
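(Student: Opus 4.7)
The plan is to combine a classical pointwise bound for $h^{\ZZ}_t$ expressed in terms of $\varphi$ with the two estimates of Lemma~\ref{l: stimaphi}. First I would prove the pointwise bound
\[
h^{\ZZ}_t(n) \lesssim \frac{1}{\sqrt t}\, e^{|n|\varphi(t/|n|)}, \qquad n \in \ZZ\setminus\{0\},\ t\ge 1,
\]
with the standard on-diagonal estimate $h^{\ZZ}_t(0)\lesssim t^{-1/2}$ handled separately. The natural route is via the Fourier inversion formula \(h^{\ZZ}_t(n) = (2\pi)^{-1}\int_{-\pi}^\pi e^{-t(1-\cos\xi)-in\xi}\,d\xi\) followed by a contour shift $\xi\mapsto\xi+i\lambda$, giving $|h^{\ZZ}_t(n)| \leq e^{n\lambda-t}\,I_0(t\cosh\lambda)$. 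Optimizing with $\sinh\lambda = -|n|/t$ and invoking the classical bound $I_0(s)\lesssim e^s/\sqrt{1+s}$, the exponent is identified with $|n|\varphi(t/|n|)$ via the elementary identity
\[
|n|\varphi(t/|n|) = \sqrt{n^2+t^2} - t - |n|\log\left(\frac{|n|+\sqrt{n^2+t^2}}{t}\right),
\]
while the prefactor $(n^2+t^2)^{-1/4}$ is absorbed into $t^{-1/2}$ since $t\ge 1$.

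Given this pointwise bound, I would split the range of $n$ at a threshold $x_* = x_*(\varepsilon) > 0$ chosen small enough that $\log x_* + 1 - \log 2 \leq -1 - \varepsilon$, e.g.\ $x_* = (2/e)\,e^{-1-\varepsilon}$. In the diffusive region $|n|\le t/x_*$, I apply Lemma~\ref{l: stimaphi} with $x_0=x_*$ to obtain $|n|\varphi(t/|n|) \le -C_0 n^2/t$. Completing the square,
\[
\varepsilon\,\frac{|n|}{\sqrt t} - C_0\,\frac{n^2}{t} \le \frac{\varepsilon^2}{4C_0},
\]
so $e^{\varepsilon|n|/\sqrt t}\, h^{\ZZ}_t(n) \lesssim t^{-1/2}$ uniformly in $n$; moreover $\sum_n t^{-1/2} e^{\varepsilon|n|/\sqrt t - C_0 n^2/t}$ is a Riemann sum of step $1/\sqrt t$ dominated by the finite integral $\int_\RR e^{\varepsilon|u|-C_0 u^2}\,du$, a constant independent of $t$.

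In the tail region $|n|> t/x_*$, I would use instead the logarithmic bound $\varphi(x)\le \log x+1-\log 2$, which yields
\[
|n|\varphi(t/|n|) \le |n|(\log x_* + 1 - \log 2) \le -(1+\varepsilon)|n|.
\]
Combined with $\varepsilon|n|/\sqrt t \le \varepsilon|n|$ for $t\ge 1$, the full exponent is $\le -|n|$, so $e^{\varepsilon|n|/\sqrt t}\, h^{\ZZ}_t(n) \lesssim e^{-|n|}/\sqrt t$, giving both the uniform $t^{-1/2}$ bound and a geometrically convergent tail sum. Adding the contributions of the two regions together with the $n=0$ term proves \eqref{LinfZeta} and \eqref{L1Zeta}. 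The main obstacle is the pointwise bound in the first step; once that is available, Lemma~\ref{l: stimaphi} reduces the rest to routine case analysis.
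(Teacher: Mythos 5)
Your proof is correct, and its second half follows essentially the same route as the paper: split the range of $n$ according to the size of $t/|n|$, apply the two bounds of Lemma~\ref{l: stimaphi} (your threshold $t/x_*$ with $x_*=(2/e)e^{-1-\varepsilon}$ plays the same role as the paper's $e^{\varepsilon+1}t$), absorb the weight $e^{\varepsilon|n|/\sqrt t}$ by completing the square in the diffusive regime and by the margin $-(1+\varepsilon)|n|$ in the tail, then sum via a Gaussian Riemann-sum/integral comparison plus a geometric series. The genuine difference is the first step: the paper simply quotes the two-sided estimate $h_t^\ZZ(n)\approx (|n|+t)^{-1/2}e^{|n|\varphi(t/|n|)}$ from \cite[Proposition 2.3]{CMS} (the same estimate \eqref{est: grig} used later for the optimality remark, where the lower bound is also needed), whereas you re-derive the upper half of it in a self-contained way, via Fourier inversion, a contour shift giving $|h_t^\ZZ(n)|\le e^{n\lambda-t}I_0(t\cosh\lambda)$, the choice $\sinh\lambda=-|n|/t$, and the bound $I_0(s)\lesssim e^s/\sqrt{1+s}$; your identity rewriting the resulting exponent as $|n|\varphi(t/|n|)$ is correct, and the prefactor $(n^2+t^2)^{-1/4}\le t^{-1/2}$ for $t\ge1$ matches what is needed. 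What the citation buys is brevity and the matching lower bound; what your derivation buys is independence from \cite{CMS} for this proposition, at the cost of having to justify the contour shift and the Bessel estimate. One small point to tidy up: in the diffusive region your integrand $e^{\varepsilon|u|-C_0u^2}$ is not monotone, so the Riemann-sum comparison needs either a word about unimodality or, more simply, the prior absorption $\varepsilon|u|-C_0u^2\le \varepsilon^2/(2C_0)-(C_0/2)u^2$, after which the comparison with a decreasing Gaussian is exactly as in the paper; this is cosmetic, not a gap.
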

\begin{proof}
By \cite[Proposition 2.3]{CMS}, for all $t>0$ and $n \in \ZZ$,
\begin{equation}\label{est: grig}
  h_t^{\ZZ}(n) \approx \begin{cases} \frac{e^{ |n|\varphi(t/|n|)}}{(|n|+t)^{1/2}} &\text{if } n \neq 0, \\ 
  (1+t)^{-1/2} &\text{if } n=0, 
  \end{cases} 
\end{equation}
where $\varphi$ is the function defined in \eqref{fi}.

Fix now $\varepsilon \geq 0$. By applying Lemma \ref{l: stimaphi} with $x_0= 1/e^{\varepsilon+1}$ we obtain that, for every $n \in \NN_+$ and $t>0$,
\[
\varphi(t/n) \leq \begin{cases}
-\varepsilon -\log 2 &\text{if } t/n \leq 1/e^{\varepsilon+1}, \\
-C_0 n/t &\text{if } t/n \geq 1/e^{\varepsilon+1}, 
\end{cases}
\]
where the constant $C_0>0$ may depend on $\varepsilon$; thus, by \eqref{est: grig},
\[
    h^{\ZZ}_t(n) \lesssim \frac{1}{(n+t)^{1/2}} \begin{cases} 
		e^{-\varepsilon n} 2^{-n} 
		&\text{if } n \ge e^{\varepsilon+1} t,  \\ 
    e^{-C_0 n^2/t} &\text{if } n \le e^{\varepsilon+1} t,
    \end{cases}
\]
and therefore, if we also assume $t \geq 1$,
\begin{equation}\label{stimanuova}
   e^{\varepsilon n/\sqrt{t}} h^{\ZZ}_t(n) \lesssim t^{-1/2} \begin{cases}
	 2^{-n}, &\text{if } n \ge e^{\varepsilon+1} t, \\ 
   e^{-C n^2/t}, &\text{if } n \le e^{\varepsilon+1} t
   \end{cases}
\end{equation}
for some other constant $C> 0$ depending on $\varepsilon$. From \eqref{stimanuova} and the case $n=0$ of \eqref{est: grig} we immediately deduce \eqref{LinfZeta} for all $t \geq 1$, 
and moreover
\[\begin{split}
    \sum_{n \in \ZZ} e^{\varepsilon |n|/\sqrt{t}} h^{\ZZ}_t(n) &\lesssim h^{\ZZ}_t(0)+    \sum_{k=1}^\infty \bigg( 2^{-k}+t^{-1/2} e^{-C' k^2/t}\bigg) \\ 
    &\lesssim 1+t^{-1/2} \int_1^{\infty}  e^{-C'x^2/t} \, dx \lesssim 1,
\end{split}\]
which gives \eqref{L1Zeta}.
\end{proof}

\begin{remark}
One may wonder whether the uniform bound \eqref{LinfZeta} for $h_t^\ZZ$ may be improved to a super-exponential decay estimate of the form
\[
h^{\ZZ}_t(n) \lesssim t^{-1/2} e^{-\varepsilon (|n|/\sqrt{t})^\alpha} \qquad\forall t\geq 1, n \in \ZZ,
\]
for some $\alpha > 1$ and $\varepsilon>0$, i.e., a gaussian-type bound for large time. However, this is not the case, so in these respects the bound \eqref{LinfZeta} is optimal. Indeed, from \eqref{fi} one easily deduces that there is $\kappa \in (0,1)$ sufficiently small that $-\varphi(x) \approx \log(1/x)$ for $x \in (0,\kappa]$; thus, from \eqref{est: grig}, it follows that there is $c>0$ such that
\[
h_t^\ZZ(n) \gtrsim n^{-1/2} e^{-c n\log(n/t)} \qquad \forall n \geq t/\kappa >0.
\]
This lower bound is incompatible with the above gaussian-type upper bound whenever $\alpha>1$, as one can see, e.g., by comparing them in the region where $t \simeq n^\delta$ for some sufficiently small $\delta>0$. We point out that a super-exponential decay can be recovered if the heat semigroup $\{e^{-t\Delta}\}_{t>0}$ is replaced by its discrete-time counterpart $\{(1-\Delta)^k\}_{k \in \NN_+}$; gaussian-type bounds for the latter are known to hold in greater generality than for $\ZZ$, and sometimes are also referred  to as ``heat kernel bounds'' in the literature on graphs, see, e.g., \cite{Grigoryan,HSa}.
\end{remark}

\begin{remark}
When $\varepsilon=0$, the estimate \eqref{L1Zeta} is of course a consequence of the $\ell^1$-contractivity stated in \eqref{eq: htHt}, which holds for all $t>0$. However, when $\varepsilon>0$, the estimate \eqref{L1Zeta} does not hold for small $t$, since $ h^{\ZZ}_t(n) \approx t^{|n|}$ as $t \to 0^+$ for every fixed $n \in \ZZ$ (see \cite[Proposition 2.3]{CMS}).
\end{remark}

\subsection{Weighted \texorpdfstring{$L^1$}{L1}-estimates of \texorpdfstring{$H_t$}{Ht}}
In the next lemma, we provide pointwise estimates of the heat kernel $H_t$ on $\TT_{q+1}$ and its gradient, in terms of the heat kernel $h_t^\ZZ$ on $\ZZ$.

\begin{lemma}\label{lem: gradient}
For every $t>0$ and $x,y\in \TT_{q+1}$,
\begin{equation}\label{s: stimaHt}
H_t(x,y) \approx Q(x,y) \,q^{-d(x,y)/2}  \frac{d(x,y)+1}{t}h^{\ZZ}_{t}(d(x,y)+1)
\end{equation}
and
\begin{equation}\label{s: stimagHt}
|\nabla_x H_t(x,y)|    \lesssim Q(x,y) \,q^{-d(x,y)/2} h^{\ZZ}_{t}(d(x,y)+1)
\begin{cases}
   \displaystyle \frac{1}{t} \left(\frac{d(x,y)^2}{t}+ 1\right)  &\text{if } y \not\le x,\\[1em]
   \displaystyle \frac{d(x,y)+1}{t} &\text{if } y \le x.
\end{cases}
\end{equation}
The implicit constants in the above estimates do not depend on $q$.
\end{lemma}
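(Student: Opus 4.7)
Estimate \eqref{s: stimaHt} is immediate from the representation $H_t(x,y) = Q(x,y) J_t(d(x,y))$ in \eqref{eq: H=qJ} and the comparison \eqref{eq: approx1}. For the gradient estimate \eqref{s: stimagHt}, my starting point is $\nabla_x H_t(x,y) = H_t(x,y) - H_t(\prd(x),y)$ together with two elementary geometric observations: since $\ell(\prd(x)) = \ell(x)+1$, one has $Q(\prd(x),y) = q^{-1/2} Q(x,y)$; and reasoning with the confluent of $x$ and $y$ relative to $\omega_*$ shows that $d(\prd(x),y) = d(x,y)+1$ if $y \le x$ and $d(\prd(x),y) = d(x,y)-1$ otherwise. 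Consequently, setting $d = d(x,y)$, one has $\nabla_x H_t(x,y) = Q(x,y) [J_t(d) - q^{-1/2} J_t(d \pm 1)]$, with sign depending on the case.

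When $y \le x$, the triangle inequality is already sharp enough: applying \eqref{s: stimaHt} to both $H_t(x,y)$ and $H_t(\prd(x),y)$ and using the monotonicity of $h^{\ZZ}_t$ to bound $h^{\ZZ}_t(d+2) \le h^{\ZZ}_t(d+1)$ yields the desired estimate. When $y \not\le x$ (so $d \ge 1$), the triangle inequality is too lossy and one must extract cancellation. From \eqref{10}, a direct computation gives
\[
J_t(d) - q^{-1/2} J_t(d-1) = \tfrac{2}{t}\, q^{-d/2} \sum_{k=0}^{\infty} q^{-k}\bigl[h^{\ZZ}_t(d+2k+1) - (d+2k)\bigl(h^{\ZZ}_t(d+2k) - h^{\ZZ}_t(d+2k+1)\bigr)\bigr].
\]
The first summand $\sum_k q^{-k} h^{\ZZ}_t(d+2k+1)$ is dominated by $C\, h^{\ZZ}_t(d+1)$ by monotonicity and the convergence of the geometric series (uniformly in $q \ge 2$), producing the $1/t$ term of the target. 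For the second summand, I would apply Proposition \ref{prop: cms}\ref{en:cmsii} at $j = d+2k+1$ together with monotonicity of $h^{\ZZ}_t$ to obtain the refined gradient bound $h^{\ZZ}_t(d+2k) - h^{\ZZ}_t(d+2k+1) \le \frac{2(d+2k+1)}{t} h^{\ZZ}_t(d+2k+1)$; this reduces the summand to $\lesssim \tfrac{2}{t}\sum_k q^{-k}(d+2k)(d+2k+1) h^{\ZZ}_t(d+2k+1)$, which upon splitting $(d+2k+1)^2 \le 2(d+1)^2 + 8k^2$ and invoking the uniform boundedness of $\sum_k q^{-k}$ and $\sum_k q^{-k}k^2$ for $q \ge 2$ collapses to $\lesssim d^2 h^{\ZZ}_t(d+1)/t$, giving the $d^2/t^2$ term.

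The main obstacle is precisely the choice of upper bound on the gradient $h^{\ZZ}_t(d+2k) - h^{\ZZ}_t(d+2k+1)$. Applying Proposition \ref{prop: cms}\ref{en:cmsii} at $j = d+2k$ instead would produce a bound in terms of $h^{\ZZ}_t(d+2k)$, but this quantity fails to be uniformly comparable to $h^{\ZZ}_t(d+1)$ when $d \gg t$, and the argument would break down in that regime. Using the identity at $j = d+2k+1$ keeps $h^{\ZZ}_t(d+2k+1)$ inside the sum, after which monotonicity makes every summand uniformly comparable to $h^{\ZZ}_t(d+1)$ with constants independent of $q \ge 2$, completing the argument.
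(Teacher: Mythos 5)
Your proof is correct and follows essentially the same route as the paper: the same representation $H_t = Q\,J_t(d)$ with the case split on $y \le x$ versus $y \not\le x$, the triangle inequality in the former case, and in the latter the cancellation extracted from Proposition \ref{prop: cms}\ref{en:cmsii} combined with the monotonicity of $h_t^{\ZZ}$ and the uniform (in $q\ge 2$) summability of the geometric factors. Your rearrangement of $(d+2k+1)h^{\ZZ}_t(d+2k+1)-(d+2k)h^{\ZZ}_t(d+2k)$ is just a cosmetic variant of the paper's two-sided bound on the same quantity.
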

\begin{proof}
The estimate \eqref{s: stimaHt} follows from \eqref{eq: H=qJ} and \eqref{eq: approx1}.

Fix now $t>0$ and $x,y \in \TT_{q+1}$ such that $y \not \le x$ and set $j=d(x,y)$, so that $d(\prd(x),y)=j-1$.
In this case, $Q(x,y)q^{-d(x,y)/2}=Q(\prd(x),y)q^{-d(\prd(x),y)/2}$, thus, by \eqref{eq: H=qJ},
\begin{multline*}
   \nabla_x H_t(x,y) =  H_t(x,y) - H_t(\prd(x),y) \\
		=\frac{2Q(x,y)}{t}\sum_{k=0}^\infty q^{-(j+2k)/2} \left( (j+2k+1) h^{\ZZ}_t(j+2k+1)\!\!-\!\!(j+2k)h^{\ZZ}_t(j+2k) \right).
\end{multline*}
Since $h_t^{\ZZ}$ is decreasing on $\NN$, by using \ref{en:cmsii} in Proposition \ref{prop: cms}, we deduce, for every $n \in \NN$, that
\begin{equation*}
\begin{split}
      h^{\ZZ}_t(n +1) &\geq (n+1) h^{\ZZ}_t(n+1)-n h^{\ZZ}_t(n)\\
      &\geq (n+1) \left( h^{\ZZ}_t(n+2)-h^{\ZZ}_t(n)\right)
			=-\frac{2(n+1)^2}{t}h^{\ZZ}_t(n+1),
\end{split}
\end{equation*}
thus
\[
|(n+1) h^{\ZZ}_t(n+1)-n h^{\ZZ}_t(n)| \lesssim \left(1+ \frac{(1+n)^2}{t}\right) h_t^\ZZ(n+1).
\]
Hence, 
\[
\begin{split}
|\nabla_x H_t(x,y)| &\lesssim \frac{Q(x,y)}{t} \sum_{k=0}^\infty q^{-(j+2k)/2} \left(1+ \frac{(j+2k)^2}{t}\right) h_t^\ZZ(j+2k+1) \\
&\approx \frac{Q(x,y)}{t} q^{-j/2} \left(1+ \frac{j^2}{t}\right) h_t^\ZZ(j+1),
\end{split}
\]
which gives the first estimate in \eqref{s: stimagHt}.

Suppose now instead that $y \le x$. Then $d(\prd(x),y) = d(x,y)+1$, and moreover $Q(\prd(x),y) q^{-d(\prd(x),y)/2} \leq Q(x,y) q^{-d(x,y)/2}$. As
\[
    |\nabla_x H_t(x,y)| \le H_t(x,y)+H_t(\prd(x),y),
\]  
the second estimate in \eqref{s: stimagHt} simply follows from \eqref{s: stimaHt}.
\end{proof}

We now state a technical lemma which we will repeatedly apply to compute integrals on $\TT_{q+1}$.

\begin{lemma}\label{lem:spheres} For every $k \in \NN$ and $y \in \TT_{q+1}$,
\begin{equation}\label{eq:weighted_sphere}
\sum_{x \in S_k(y)} q^{[\ell(x)-\ell(y)]/2} \approx q^{k/2}(k+1),
\end{equation}
where $S_k(y) = \{ x \in \TT_q \tc d(x,y) = k\}$ is the sphere centred at $y$ of radius $k$. Moreover,
\begin{equation}\label{eq:weighted_sphere_rest}
\sum_{x \in S_k(y) \tc x \leq y \text{ or } y \leq x} q^{[\ell(x)-\ell(y)]/2} \approx q^{k/2}
\end{equation}
and
\begin{equation}\label{eq:weighted_sphere_rlev}
\sup_{m \in \ZZ} \sum_{x \in S_k(y) \tc \ell(x) = m} q^{[\ell(x)-\ell(y)]/2} \approx q^{k/2}.
\end{equation}
The implicit constants in the above estimates are independent of $q$.
\end{lemma}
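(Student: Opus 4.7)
The idea is to parametrize $S_k(y)$ by the shape of the geodesic from $y$ to $x$. Any such geodesic first moves $j$ steps toward the mythical ancestor (through successive predecessors) and then descends $k-j$ steps, for a unique $j \in \{0,1,\dots,k\}$; in particular $\ell(x)-\ell(y) = j - (k-j) = 2j - k$, so the weight $q^{[\ell(x)-\ell(y)]/2}$ equals $q^{j-k/2}$. Counting the number of $x$ for each $j$: if $j=0$ one simply descends $k$ steps from $y$, yielding $q^k$ vertices; if $j=k$ one only ascends, producing the single vertex $\prd^k(y)$; and if $0<j<k$, one first rises to $z=\prd^j(y)$ and then descends $k-j$ steps from $z$, where the first descent step must avoid the successor of $z$ through which one arrived from $y$, giving $(q-1)q^{k-j-1}$ vertices.

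Plugging these counts into the sum yields, for $k \ge 1$,
\[
\sum_{x \in S_k(y)} q^{[\ell(x)-\ell(y)]/2} = q^k \cdot q^{-k/2} + q^{k/2} + \sum_{j=1}^{k-1} (q-1) q^{k-j-1} \cdot q^{j-k/2} = 2 q^{k/2} + (k-1)(q-1) q^{k/2 - 1},
\]
and since $q \ge 2$ the factor $(q-1)/q$ lies in $[1/2,1)$, so the right-hand side is comparable to $q^{k/2}(k+1)$ with constants independent of $q$; the case $k=0$ is trivial. This proves \eqref{eq:weighted_sphere}. For \eqref{eq:weighted_sphere_rest} one observes that the condition $x \le y$ forces the geodesic from $y$ to $x$ to consist entirely of descents (so $j=0$), whereas $y \le x$ forces it to consist entirely of ascents (so $j=k$); the contributions from these two extreme values of $j$ are each exactly $q^{k/2}$, giving $\approx q^{k/2}$. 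For \eqref{eq:weighted_sphere_rlev}, fixing $m \in \ZZ$ is equivalent to fixing $j=(m-\ell(y)+k)/2$ (when the parity and range constraints are met); each individual $j$-term in the decomposition above contributes $\approx q^{k/2}$ uniformly in $j$, $k$ and $q$, so the supremum over $m$ is again $\approx q^{k/2}$.

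There is no real obstacle in this lemma: the proof is a direct enumeration on the tree. The only point requiring attention is the stated uniformity of the implicit constants with respect to $q$, which boils down to the elementary two-sided bound $1/2 \le (q-1)/q < 1$ valid for every $q \ge 2$.
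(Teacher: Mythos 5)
Your proof is correct and follows essentially the same route as the paper's: both decompose $S_k(y)$ according to the number $j$ of ascending steps in the geodesic from $y$ to $x$ (the paper's sets $S_k^{(j)}(y)$), use $\ell(x)-\ell(y)=2j-k$, and count the vertices for each $j$, with \eqref{eq:weighted_sphere_rest} and \eqref{eq:weighted_sphere_rlev} coming from restricting to at most two values of $j$. The only difference is that you compute the cardinalities exactly ($q^k$, $(q-1)q^{k-j-1}$, $1$) where the paper is content with $\#S_k^{(j)}(y)\approx q^{k-j}$, and your observation that $1/2\le (q-1)/q<1$ is precisely what secures the $q$-uniformity.
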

\begin{proof}
We can decompose $S_k(y) = \bigcup_{j=0}^k S_k^{(j)}(y)$, where
\[
S^{(j)}_k(y) \defeq \begin{cases}
\{x \in S_k(y) \tc x \le y\} &\text{if } j=0, \\
\{x \in S_k(y) \tc x \le \prd^{j}(y), \, x \not \le \prd^{j-1}(y)\} &\text{if } j=1,\dots,k.
\end{cases}
\]
It is easily seen that $\# S^{(j)}_k(y) \approx q^{k-j}$ and that, for every $x \in S^{(j)}_k(y)$, we have $\ell(x)-\ell(y) = 2j-k$. It follows that 
\begin{equation}\label{eq:splitting}
\sum_{x \in S_k(y)} q^{[\ell(x)-\ell(y)]/2} = \sum_{j=0}^{k} q^{j-k/2} \# S^{(j)}_k(y) \approx q^{k/2}(k+1),
\end{equation}
which proves \eqref{eq:weighted_sphere}. As for \eqref{eq:weighted_sphere_rest} and \eqref{eq:weighted_sphere_rlev}, it is enough to observe that
\begin{align*}
\{ x \in S_k(y) \tc x \leq y \text{ or } y \leq x \} &= S_k^{(0)} \cup S_k^{(k)}, \\
\{ x \in S_k(y) \tc \ell(x) = m \} &= \begin{cases} S_k^{(j)} &\text{if } j = (m-\ell(y)+k)/2 \in \{0,\dots,k\}, \\ \emptyset &\text{otherwise}, \end{cases} 
\end{align*}
so one can argue much as in \eqref{eq:splitting}, but with the sum in $j$ restricted to at most two summands instead of $k+1$, which leads to the improved estimates.
\end{proof}

We can now prove the weighted $L^1$-estimates for the heat kernel on $\TT_{q+1}$ stated in the introduction.

\begin{proof}[Proof of Theorem \ref{L1estimateHtgradHt}]
Fix $\varepsilon >0$, $t\geq 1$ and $y\in \TT_{q+1}$. 

To prove \eqref{stimapesataHt} we apply \eqref{s: stimaHt} and \eqref{eq:weighted_sphere} to obtain that
\begin{equation}\label{eq:heat_estimate}
\begin{split}
 &\sum_{x \in \TT_{q+1}} |H_t(x,y)|  \,e^{\varepsilon d(x,y)/\sqrt{t}} \mu(x) \\
&\lesssim \sum_{k=0}^{\infty} q^{-k/2} \frac{k+1}{t}h^{\ZZ}_t(k+1) \, e^{\varepsilon k/\sqrt{t}}  \sum_{x \in S_k(y)} q^{[\ell(x)-\ell(y)]/2} \\
 &\lesssim \sum_{k=0}^{\infty} \frac{(k+1)^2}{t}  e^{\varepsilon k/\sqrt{t}}  h^{\ZZ}_t(k+1) \\
 &\lesssim 1,
\end{split}
\end{equation}
where Proposition  \ref{prop:Z} was applied in the last step (since $t \geq 1$ here, any power of $(1+k)/\sqrt{t}$ can be absorbed into the exponential factor $e^{\varepsilon k/\sqrt{t}}$ simply by taking a slightly larger $\varepsilon$).

To prove \eqref{stimapesatanablaxHt} we split the sum as follows:
\[
\sum_{x \in \TT_{q+1}} | \nabla_x H_t(x,y)| \, e^{\varepsilon d(x,y)/\sqrt{t}} \mu(x) = \sum_{x \tc y \leq x} +\sum_{x \tc y \not\leq x}.
\]
For the part where $y \not\leq x$ we can argue much as in \eqref{eq:heat_estimate}, but using the first estimate in \eqref{s: stimagHt} in place of \eqref{s: stimaHt}, thus gaining an extra factor $t^{-1/2}$. 
In the region where $y \leq x$, instead, the second estimate in \eqref{s: stimagHt} does not give any gain compared to \eqref{s: stimaHt}; however, here we can use the improved bound \eqref{eq:weighted_sphere_rest} in place of \eqref{eq:weighted_sphere} and obtain that
\begin{equation}\label{eq:heat_estimate_rest}
\begin{split}
&\sum_{x  \tc y \leq x} | \nabla_x H_t(x,y)| \,e^{\varepsilon d(x,y)/\sqrt{t}} \mu(x) \\
&\lesssim \sum_{k=0}^{\infty} q^{-k/2} \frac{k+1}{t}h^{\ZZ}_t(k+1) \, e^{\varepsilon k/\sqrt{t}}  \sum_{x \in S_k(y) \tc y \leq x} q^{[\ell(x)-\ell(y)]/2} \\
 &\lesssim \sum_{k=0}^{\infty} \frac{k+1}{t} \, e^{\varepsilon k/\sqrt{t}}  h^{\ZZ}_t(k+1) \\
 &\lesssim t^{-1/2},
\end{split}
\end{equation}
as desired.

Since $H_t(x,y) = H_t(y,x)$, 
from \eqref{s: stimagHt} we also deduce a pointwise estimate for $|\nabla_y H(x,y)|$, with the roles of $x$ and $y$ reversed; 
so we can prove \eqref{stimapesatanablayHt} in much the same way as \eqref{stimapesatanablaxHt}, apart from the fact that here the sum must be split according to whether $x \leq y$ or $x \not\leq y$.

It remains to prove \eqref{stimanablaynablaxHt}.
By the semigroup property of $e^{-t\opL}$,
\[
     H_t(x,y)=\sum_{v \in \TT_{q+1}} H_{t/2}(x,v)H_{t/2}(v,y) \mu(v), \qquad x,y \in \TT_{q+1}.
\]
Thus, 
\[
  \nabla_y\nabla_x H_t(x,y) 
	=\sum_{v \in \TT_{q+1}} \nabla_x H_{t/2}(x,v) \nabla_y H_{t/2}(v,y) \mu(v)
\]
and, by the triangle inequality,
\begin{equation}\label{eq:heat_estimate_young}
\begin{split}
  &\sum_{x \in \TT_{q+1}} |\nabla_y\nabla_x H_t(x,y)| \, e^{\varepsilon d(x,y)/\sqrt{t}} \mu(x) \\
	&\leq \sum_{v \in \TT_{q+1}} |\nabla_y H_{t/2}(v,y)| \, e^{\varepsilon d(v,y)/\sqrt{t}} \mu(v) 
	 \sum_{x \in \TT_{q+1}} |\nabla_x H_{t/2}(x,v)| \, e^{\varepsilon d(x,v)/\sqrt{t}} \mu(x) \\
	&\lesssim t^{-1},
\end{split}
\end{equation}
where \eqref{stimapesatanablaxHt} and \eqref{stimapesatanablayHt} were applied.
\end{proof}

As a by-product of the above estimates, we can also show that, when the sums in Theorem \ref{L1estimateHtgradHt} are restricted to a horocycle, one gains extra decay in $t$.

\begin{proposition}
For all $\varepsilon \geq 0$ there exists $C_\varepsilon>0$ such that, for all $t \geq 1$,
\begin{align}
&\sup_{m \in \ZZ} \sup_{y \in \TT_{q+1}} \sum_{x \in \TT_{q+1} \tc \ell(x) = m} |H_t(x,y)| \, e^{\varepsilon d(x,y)/\sqrt{t}}\mu(x) \le \frac{C_\varepsilon}{\sqrt{t}}, 
 \label{rstimapesataHt} \\
&\sup_{m \in \ZZ} \sup_{y \in \TT_{q+1}} \sum_{x \in \TT_{q+1} \tc \ell(x) = m} |\nabla_x H_t(x,y)| \, e^{\varepsilon d(x,y)/\sqrt{t}} \mu(x) \le \frac{C_\varepsilon}{t},
\label{rstimapesatanablaxHt} \\
&\sup_{m \in \ZZ} \sup_{y \in \TT_{q+1}} \sum_{x \in \TT_{q+1} \tc \ell(x) = m} |\nabla_y H_t(x,y)| \, e^{\varepsilon d(x,y)/\sqrt{t}} \mu(x) \le \frac{C_\varepsilon}{t},
\label{rstimapesatanablayHt} \\
&\sup_{m \in \ZZ} \sup_{y \in \TT_{q+1}} \sum_{x \in \TT_{q+1} \tc \ell(x) = m} |\nabla_y \nabla_x H_t(x,y)| \, e^{\varepsilon d(x,y)/\sqrt{t}} \mu(x) \le \frac{C_\varepsilon}{t\sqrt{t}}.
\label{rstimapesatanablaxnablayHt}
\end{align}
Moreover, the constant in the estimates above does not depend on $q$.
\end{proposition}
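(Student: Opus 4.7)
The strategy is to revisit the proof of Theorem \ref{L1estimateHtgradHt} and replace the sphere-summation formula \eqref{eq:weighted_sphere} by the sharper horocycle-restricted version \eqref{eq:weighted_sphere_rlev}. The gain is the removal of the combinatorial factor $k+1$ present in \eqref{eq:weighted_sphere}, which after summation against the weight $h_t^\ZZ(k+1) e^{\varepsilon k/\sqrt{t}}$ yields an extra $t^{-1/2}$ decay. Concretely, since $Q(x,y)\mu(x) = q^{[\ell(x)-\ell(y)]/2}$, \eqref{eq:weighted_sphere_rlev} gives
\[
\sum_{x \in S_k(y) \tc \ell(x)=m} Q(x,y) q^{-k/2}\mu(x) \approx 1
\]
uniformly in $y \in \TT_{q+1}$, $m\in\ZZ$ and $k\in\NN$, to be compared with the weight $\approx k+1$ appearing in \eqref{eq:heat_estimate}.

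The bound \eqref{rstimapesataHt} then follows from \eqref{s: stimaHt} and the standard trick of writing $(k+1)/t = (k+1)/\sqrt{t}\cdot 1/\sqrt{t}$ and absorbing the polynomial factor $(k+1)/\sqrt{t}$ into the exponential weight (permissible since $t\geq 1$ and Proposition \ref{prop:Z} holds for every $\varepsilon \geq 0$), which reduces the sum over $k$ to \eqref{L1Zeta} up to the desired $t^{-1/2}$ factor. For \eqref{rstimapesatanablaxHt} one splits along $y \not\le x$ and $y \le x$. In the region $y \not\le x$, the first estimate in \eqref{s: stimagHt} combined with the same argument yields a bound by $\sum_k (1+k^2/t)/t \cdot h_t^\ZZ(k+1)e^{\varepsilon k/\sqrt{t}} \lesssim 1/t$, again absorbing the polynomial in $k/\sqrt{t}$ into the exponential. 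In the region $y \le x$, the constraints $y\le x$ and $\ell(x)=m$ uniquely determine $x=\prd^{m-\ell(y)}(y)$, so the sum reduces to at most one summand with $k=m-\ell(y)$; using the second estimate in \eqref{s: stimagHt} and the identity $Q(x,y) q^{-k/2}\mu(x)=1$ valid when $y \le x$, this summand is dominated by $\sup_{k\geq 0} (k+1)/t\cdot h_t^\ZZ(k+1)e^{\varepsilon k/\sqrt{t}} \lesssim 1/t$, which follows from \eqref{LinfZeta} applied with a slightly larger $\varepsilon$.

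The estimate \eqref{rstimapesatanablayHt} is proven by the same scheme after using the symmetry $H_t(x,y)=H_t(y,x)$ to transfer the pointwise estimate \eqref{s: stimagHt} to $|\nabla_y H_t(x,y)|$ with the roles of $x$ and $y$ exchanged; the case split is now $x \not\le y$ vs $x \le y$, and in the latter the set $\{x \tc x\le y,\,\ell(x)=m\}$ has $q^{\ell(y)-m}$ elements, each weighted by $Q(x,y) q^{-k/2}\mu(x)=q^{-k}$, so the total again collapses to an effective single-term bound $\lesssim 1/t$. Finally, \eqref{rstimapesatanablaxnablayHt} follows by mimicking \eqref{eq:heat_estimate_young}: the semigroup identity at time $t/2$ and the triangle inequality for the exponential weight factorize the sum into the inner horocycle-restricted sum in $x$, bounded by $t^{-1}$ via \eqref{rstimapesatanablaxHt} at time $t/2$ (with $\varepsilon$ replaced by $\varepsilon/\sqrt{2}$, so that $(\varepsilon/\sqrt{2})/\sqrt{t/2} = \varepsilon/\sqrt{t}$), and the outer unrestricted sum in $v$, bounded by $t^{-1/2}$ via \eqref{stimapesatanablayHt}; the product gives the claimed $t^{-3/2}$. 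The only real obstacle is bookkeeping: one must consistently verify that the polynomial-in-$k/\sqrt{t}$ factors arising from \eqref{s: stimagHt} can be absorbed into the exponential weight, which is legitimate thanks to the range $t\geq 1$ and the freedom in choosing $\varepsilon$ in Proposition \ref{prop:Z}.
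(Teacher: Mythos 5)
Your proposal is correct and follows essentially the same route as the paper: replace \eqref{eq:weighted_sphere} by the horocycle-restricted bound \eqref{eq:weighted_sphere_rlev} for the generic terms, observe that the constraints $\ell(x)=m$ together with $y\le x$ (or $x\le y$) force a single value of $d(x,y)$ so that the uniform bound \eqref{LinfZeta} can replace the $\ell^1$-bound \eqref{L1Zeta} and yield the extra $t^{-1/2}$, and obtain the mixed gradient estimate from the semigroup factorization as in \eqref{eq:heat_estimate_young} with the inner sum controlled by \eqref{rstimapesatanablaxHt}. Your explicit bookkeeping (e.g.\ $Q(x,y)q^{-k/2}\mu(x)=1$ when $y\le x$, the count $q^{k}\cdot q^{-k}$ when $x\le y$, and the rescaling of $\varepsilon$ at time $t/2$) matches the computations the paper leaves implicit.
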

\begin{proof}
By following the proof of Theorem \ref{L1estimateHtgradHt}, but using the improved bound \eqref{eq:weighted_sphere_rlev} in place of \eqref{eq:weighted_sphere}, one proves \eqref{rstimapesataHt}, as well as the analogues of \eqref{rstimapesatanablaxHt} and \eqref{rstimapesatanablayHt} where the sums are restricted to $y \not\leq x$ and $x \not\leq y$ respectively.

For the remaining parts of \eqref{rstimapesatanablaxHt} and \eqref{rstimapesatanablayHt}, one simply observes that, for fixed $m \in \ZZ$ and $y \in \TT_{q+1}$, the set of the $x \in \TT_{q+1}$ such that $\ell(x) = m$ and either $y \leq x$ or $x \leq y$ is made of points with $d(x,y) = |m-\ell(y)|$, that is, the distance $d(x,y)$ takes at most one value under those constraints.
As a consequence, one obtains an estimate analogous to \eqref{eq:heat_estimate_rest}, where however the sums in $k$ are restricted to a single value of the summation index, instead of ranging over the whole $\NN$;
by using the uniform bound \eqref{LinfZeta} for $h_t^\ZZ$ instead of the $\ell^1$-bound \eqref{L1Zeta}, one then gains an extra factor $t^{-1/2}$, as desired.

Finally, to prove \eqref{rstimapesatanablaxnablayHt} one can proceed much as in \eqref{eq:heat_estimate_young}, using \eqref{rstimapesatanablaxHt} in place of \eqref{stimapesatanablaxHt} to control the inner sum in $x$.
\end{proof}

\section{Boundedness of the Riesz transform}\label{s: proof}

In this section we prove Theorem \ref{t: teoRiesz}. The proof will be based on the following result proved in \cite[Theorem 1.2]{hs} and \cite[Theorem 3]{ATV1}
(see also \cite[Theorem 5.8]{LSTV}).

\begin{proposition}\label{lemmahebisch}
Let $\cK$ be an integral operator bounded on $L^2(\mu)$ such that $\cK=\sum_{n\in\ZZ}\cK_n$, where the series converges in the strong topology of $L^2(\mu)$.
Assume that $K_n$ is the integral kernel of $\cK_n$ and that there exist constants $C>0, c \in (0,1)$ and $a,b>0$ such that
\begin{align}
    &\sum_{x \in \TT_{q+1}} |K_n(x,y)| \, (1+c^{n}d(x,y))^a \, \mu(x) \le C \qquad \forall y \in \TT_{q+1} \label{Kn},\\ 
     &\sum_{x \in \TT_{q+1}} |K_n(x,y)-K_n(x,z)| \, \mu(x) \le C(c^{n}d(y,z))^b \qquad \forall y,z \in \TT_{q+1}. \label{Knseconda}
\end{align}
Then, $\cK$  is of weak type $(1,1)$, bounded on $L^p(\mu)$ for every $p\in (1, 2]$, and bounded from $H^1(\mu)$ to $L^1(\mu)$.
\end{proposition}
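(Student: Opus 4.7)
The approach is to establish weak type $(1,1)$ first; $L^p$-boundedness for $p \in (1,2)$ then follows by Marcinkiewicz interpolation with the assumed $L^2$-boundedness, while the $H^1(\mu) \to L^1(\mu)$ bound comes from a parallel argument applied to atoms. The first step is the Calder\'on--Zygmund decomposition of Hebisch--Steger adapted to the nondoubling space $(\TT_{q+1}, d, \mu)$: given $f \in L^1(\mu)$ and $\alpha > 0$, one writes $f = g + \sum_i b_i$ with $\|g\|_\infty \lesssim \alpha$ and $\|g\|_1 \le \|f\|_1$, each $b_i$ of mean zero and supported on an \emph{admissible trapezoid} $R_i$ of some scale $r_i$, with $\|b_i\|_1 \lesssim \alpha\, \mu(R_i)$ and $\sum_i \mu(R_i) \lesssim \|f\|_1/\alpha$. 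To each $R_i$ one associates the integer $n_i \in \ZZ$ with $c^{-n_i} \approx r_i$ and a dilate $R_i^*$ satisfying $\mu(R_i^*) \lesssim \mu(R_i)$ and $d(x,y) \gtrsim c^{-n_i}$ whenever $y \in R_i$ and $x \notin R_i^*$.

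The good part is immediate: by $L^2$-boundedness of $\cK$ and Chebyshev,
\[
\mu\{|\cK g| > \alpha/2\} \lesssim \alpha^{-2} \|g\|_2^2 \lesssim \alpha^{-2}\|g\|_\infty\|g\|_1 \lesssim \|f\|_1/\alpha.
\]
Setting $E = \bigcup_i R_i^*$, so that $\mu(E) \lesssim \|f\|_1/\alpha$, one reduces the treatment of the bad part $b = \sum_i b_i$ to proving $\sum_i \|\cK b_i\|_{L^1(E^c)} \lesssim \|f\|_1$, followed by Chebyshev on $E^c$. The core estimate is therefore $\|\cK b_i\|_{L^1(E^c)} \lesssim \alpha\, \mu(R_i)$ for each $i$, obtained by splitting $\cK b_i = \sum_n \cK_n b_i$ at the scale $n_i$ (legitimised by the strong $L^2$-convergence of the defining series).

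For $n \ge n_i$ we exploit the cancellation $\int b_i \, d\mu = 0$: fixing any $y_i \in R_i$,
\[
\cK_n b_i(x) = \sum_{y \in R_i} \bigl[K_n(x,y) - K_n(x,y_i)\bigr]\, b_i(y)\,\mu(y),
\]
and \eqref{Knseconda} together with $d(y,y_i) \lesssim c^{-n_i}$ gives $\|\cK_n b_i\|_1 \lesssim c^{(n-n_i)b}\|b_i\|_1$, which is summable over $n \ge n_i$. For $n < n_i$ we use size decay: since $y \in R_i$ and $x \notin R_i^*$ imply $c^n d(x,y) \gtrsim c^{n-n_i} > 1$, we have $(1+c^n d(x,y))^{-a} \lesssim c^{(n_i-n)a}$, and \eqref{Kn} then yields
\[
\sum_{x \notin R_i^*} |K_n(x,y)|\, \mu(x) \lesssim c^{(n_i-n)a}
\]
uniformly in $y \in R_i$, whence $\|\cK_n b_i\|_{L^1(E^c)} \lesssim c^{(n_i-n)a}\|b_i\|_1$, again summable over $n < n_i$. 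Both series contribute $\lesssim \|b_i\|_1 \lesssim \alpha\, \mu(R_i)$, and summing over $i$ completes the proof of weak type $(1,1)$.

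The principal obstacle lies not in these kernel manipulations but in the Calder\'on--Zygmund decomposition itself in the nondoubling setting: producing admissible trapezoids whose geometry is naturally compatible with the scale $c^{-n}$ of the hypotheses, and exhibiting dilates $R_i^*$ with the stated measure control and distance separation; this is precisely the machinery developed in \cite{hs}. Finally, for the $H^1(\mu) \to L^1(\mu)$ bound, one invokes the atomic characterisation of $H^1(\mu)$ from \cite{ATV1,LSTV}: atoms are supported on admissible trapezoids with mean zero and an $L^2$-normalisation, and so play exactly the role of the bad pieces $b_i$; the same two-regime splitting argument yields $\|\cK a\|_1 \lesssim 1$ uniformly over atoms $a$, from which the $H^1 \to L^1$ boundedness follows by superposition.
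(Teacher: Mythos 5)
The paper does not prove this Proposition itself: it invokes it as a black box, citing \cite[Theorem~1.2]{hs}, \cite[Theorem~3]{ATV1} and \cite[Theorem~5.8]{LSTV}. Your blind reconstruction is, in essence, a correct outline of the Calder\'on--Zygmund argument that underlies those references. The overall architecture is right: Hebisch--Steger decomposition $f=g+\sum_i b_i$ over admissible trapezoids; $L^2$-boundedness for the good part; two-regime treatment of the bad part, with the regularity hypothesis \eqref{Knseconda} plus cancellation of $b_i$ handling scales $n\ge n_i$, and the weighted size hypothesis \eqref{Kn} handling scales $n<n_i$; Marcinkiewicz interpolation for $1<p<2$; and a parallel atomic argument for $H^1\to L^1$. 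The exponent bookkeeping in both regimes ($c^{(n-n_i)b}$ summable for $n\ge n_i$, $c^{(n_i-n)a}$ summable for $n<n_i$) is correct.

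The one point worth emphasizing is the load-bearing geometric claim you isolate, namely that each admissible trapezoid $R_i$ admits a dilate $R_i^*$ with \emph{both} $\mu(R_i^*)\lesssim\mu(R_i)$ and $d(R_i,\complement R_i^*)\gtrsim c^{-n_i}$. In a nondoubling space, the full metric $c^{-n_i}$-neighbourhood of a set can have vastly larger measure, so this pair of properties is far from automatic; it holds precisely because the trapezoids in \cite{hs,LSTV} are chosen with band height comparable to the separation scale, so that the $c^{-n_i}$-enlargement never escapes upward out of the cone of the defining vertex. You correctly attribute this to the machinery of \cite{hs} rather than treating it as obvious, which is the honest thing to do. With that caveat explicit, the proposal is a faithful sketch of the proof behind the cited result, and contains no logical gap.
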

  
\begin{remark}\label{rem:cz_gradient}
When $b=1$, the condition \eqref{Knseconda} is equivalent to the following estimate:
\begin{equation}
     \sup_{y \in \TT_{q+1}} \sum_{x \in \TT_{q+1}} |\nabla_y K_n(x,y) | \, \mu(x) \le C\, c^{n} . \label{Knsecondagrad}
\end{equation}
This fact can be easily seen by applying the following elementary lemma. 
\end{remark}

\begin{lemma}\label{l: nabla_yF}
Let $F : \TT_{q+1} \times \TT_{q+1} \to \CC$ and set
\[
C_0 = \sup_{y\in\TT_{q+1}} \sum_{x\in\TT_{q+1}} |\nabla_y F(x,y)| \, \mu(x).
\]
Then
\[
\sum_{x \in \TT_{q+1}} |F(x,y)-F(x,z)| \, \mu(x)\leq C_0 \, d(y,z) \qquad\forall y,z \in \TT_{q+1}.
\]
\end{lemma}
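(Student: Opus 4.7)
My plan is a straightforward telescoping argument along the unique geodesic in the tree connecting $y$ to $z$. Since $\TT_{q+1}$ is a tree, there is a unique shortest path $y = v_0, v_1, \dots, v_n = z$ with $n = d(y,z)$, and each consecutive pair $v_i, v_{i+1}$ is a pair of neighbours, so exactly one of them is the predecessor of the other. Writing $F(x,y) - F(x,z)$ as the telescoping sum $\sum_{i=0}^{n-1} (F(x,v_i) - F(x,v_{i+1}))$ and applying the triangle inequality yields
\[
|F(x,y) - F(x,z)| \le \sum_{i=0}^{n-1} |F(x,v_i) - F(x,v_{i+1})|.
\]

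The key observation is that each summand is exactly the absolute value of a flow gradient in the second variable. Indeed, if $v_{i+1} = \prd(v_i)$ then $F(x,v_i) - F(x,v_{i+1}) = \nabla_y F(x,v_i)$, while if $v_i = \prd(v_{i+1})$ then $F(x,v_i) - F(x,v_{i+1}) = -\nabla_y F(x,v_{i+1})$. In either case, $|F(x,v_i) - F(x,v_{i+1})| = |\nabla_y F(x,w_i)|$ for some vertex $w_i \in \{v_i, v_{i+1}\}$.

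Summing over $x \in \TT_{q+1}$ against $\mu(x)$, interchanging the finite sum in $i$ with the sum over $x$, and applying the definition of $C_0$ to each of the $n$ inner sums, I obtain
\[
\sum_{x \in \TT_{q+1}} |F(x,y) - F(x,z)| \, \mu(x) \le \sum_{i=0}^{n-1} \sum_{x \in \TT_{q+1}} |\nabla_y F(x,w_i)| \, \mu(x) \le n \, C_0 = C_0 \, d(y,z),
\]
which is the desired inequality. There is no real obstacle here: the argument uses only the tree structure (uniqueness of geodesics) and the definition of the flow gradient as a one-step difference, so the bound is essentially a discrete fundamental theorem of calculus. The same argument of course shows that the converse implication in Remark \ref{rem:cz_gradient} holds, giving the equivalence claimed there (up to passing to the supremum over neighbouring $y,z$ to recover \eqref{Knsecondagrad} from \eqref{Knseconda} with $b=1$).
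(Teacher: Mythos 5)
Your proof is correct and follows essentially the same route as the paper: a telescoping sum along the geodesic from $y$ to $z$, identifying each one-step difference in the second variable with $\pm\nabla_y F(x,\cdot)$ at one of the two endpoints of the edge, and then summing against $\mu$ to get the factor $C_0\,d(y,z)$. The additional remark about recovering the equivalence in Remark~\ref{rem:cz_gradient} is also consistent with the paper's intent.
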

\begin{proof} 
Consider a path $x_0,\dots,x_d$ joining $y$ to $z$, with $d=d(y,z)$.
As $x_i \sim x_{i+1}$, we either have $x_i \in \scc(x_{i+1})$ or $x_i=\prd(x_{i+1})$ for every $i=0,\dots,d-1$. Thus, 
\[
 \sum_{x\in \TT_{q+1}} |F(x,y)-F(x,z)| \, \mu(x)\leq   \sum_{i=0}^{d-1}\sum_x|F(x,x_i)-F(x,x_{i+1})| \, \mu(x)\leq  C_0\, d(y,z),
\]
as required.
\end{proof} 

We aim to apply Proposition \ref{lemmahebisch} to the Riesz transform $\Rz$, whose integral kernel with respect to the measure $\mu$ is 
\[
R(x,y)= \frac{1}{\sqrt{\pi}}\int_0^\infty t^{-1/2} \nabla_x H_t(x,y) \, dt ,  \qquad x,y \in \TT_{q+1}.
\]
The part of the integral corresponding to small $t$ is easily dealt with.

\begin{lemma}\label{lem:localriesz}
The operator $\Rz^{(0)}$ with integral kernel
\[
    R^{(0)}(x,y)=\frac{1}{\sqrt{\pi}}\int_0^1 t^{-1/2} \nabla_x H_t(x,y) \, dt,
\]
is bounded on $L^p(\mu)$ for all $p\in [1,\infty]$.
\end{lemma}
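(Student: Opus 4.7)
The plan is to exploit the trivial observation that both the flow gradient $\nabla$ and the heat semigroup $e^{-t\opL}$ are bounded operators on every $L^p(\mu)$ individually, so that the bounded operator-valued integrand $t \mapsto t^{-1/2} \nabla e^{-t\opL}$ can be integrated against the integrable singularity $t^{-1/2}$ on $(0,1)$ by Minkowski's integral inequality. Since the statement concerns only the short-time part of the Riesz transform, no cancellation between the gradient and the heat semigroup needs to be exploited; in particular the weighted $L^1$-estimates of Theorem~\ref{L1estimateHtgradHt} (which crucially require $t \ge 1$) are not used here.

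First, I would verify that $\nabla$ is bounded on $L^p(\mu)$ for every $p \in [1,\infty]$, with operator norm at most $2$. The bound on $L^\infty(\mu)$ is immediate from the definition. For the $L^1(\mu)$-bound, one uses the flow condition \eqref{eq: flow} to rewrite
\[
\sum_{x \in \TT_{q+1}} |f(\prd(x))|\, \mu(x) = \sum_{y \in \TT_{q+1}} |f(y)| \sum_{z \in \scc(y)} \mu(z) = \|f\|_{L^1(\mu)},
\]
so that $\|\nabla f\|_{L^1(\mu)} \le 2\|f\|_{L^1(\mu)}$ by the triangle inequality; Riesz--Thorin interpolation then covers the remaining $p \in (1,\infty)$.

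Next, $e^{-t\opL}$ is a contraction on $L^p(\mu)$ for every $t>0$ and every $p \in [1,\infty]$. Indeed, \eqref{contractive} gives $L^1$-contractivity, the Spectral Theorem gives $L^2$-contractivity, and the $L^\infty$-contractivity follows either by $L^1$--$L^\infty$ duality (using that $\opL$, and hence $e^{-t\opL}$, is self-adjoint on $L^2(\mu)$) or directly from the computation $\opL\mathbf{1}=0$ together with the positivity of $H_t$ (which is manifest from \eqref{eq: H=qJ}--\eqref{10}). Consequently $\nabla e^{-t\opL}$ has $L^p(\mu)$-operator norm at most $2$, uniformly in $t>0$.

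Combining these with Minkowski's integral inequality yields
\[
\|\Rz^{(0)} f\|_{L^p(\mu)} \le \frac{1}{\sqrt{\pi}} \int_0^1 t^{-1/2}\, \|\nabla e^{-t\opL} f\|_{L^p(\mu)}\, dt \le \frac{4}{\sqrt{\pi}}\, \|f\|_{L^p(\mu)},
\]
first for finitely supported $f$ (where the Bochner integral is trivially well-defined) and then for general $f \in L^p(\mu)$ by density. There is essentially no obstacle to overcome here: the integrable factor $t^{-1/2}$ at the origin is precisely what makes the short-time part of $\Rz$ a harmless ``local'' piece, to be separated from the genuinely singular long-time part handled via Proposition~\ref{lemmahebisch}.
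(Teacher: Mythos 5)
Your argument is correct and is essentially the paper's proof in operator-level clothing: the paper runs a Schur-type test on the kernel, showing $\sup_y \sum_x |R^{(0)}(x,y)|\,\mu(x) \lesssim \int_0^1 t^{-1/2}\,dt \lesssim 1$ (and the same with $x$ and $y$ swapped), and the inequality $\sum_x |\nabla_x H_t(x,y)|\,\mu(x) \lesssim \sum_x |H_t(x,y)|\,\mu(x) = 1$ it relies on is exactly your combination of the flow condition \eqref{eq: flow} (which gives the $L^1(\mu)$-boundedness of $\nabla$) with the stochasticity \eqref{contractive} and positivity of $H_t$. The only point to adjust is the final density step: finitely supported functions are not dense in $L^\infty(\mu)$, so for $p=\infty$ you should instead invoke directly the kernel bound $\sup_x \sum_y |R^{(0)}(x,y)|\,\mu(y) \lesssim 1$, which follows from the same two ingredients via Tonelli and is precisely the paper's ``similar estimate with the roles of $x$ and $y$ reversed''; with that small fix the two proofs coincide in substance.
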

\begin{proof}
By \eqref{contractive},
\[
\begin{split}
\sup_{y\in\TT_{q+1}}\sum_{x \in \TT_{q+1}} |R^{(0)}(x,y)| \, \mu(x)
&\lesssim \int_0^1 t^{-1/2}\sum_{x \in \TT_{q+1}} |\nabla_x H_t(x,y)| \, \mu(x) \,dt\\
&\lesssim  \int_0^1 t^{-1/2}\sum_{x \in \TT_{q+1}} |H_t(x,y)| \, \mu(x) \,dt\\
&=  \int_0^1 t^{-1/2} \,dt  \approx 1,
\end{split}
\]
and a similar estimate holds with the roles of $x$ and $y$ reversed.
\end{proof}

Now, for every $n \in \NN$, we define 
\[
    K_n(x,y)=\frac{1}{\sqrt{\pi}}\int_{2^n}^{2^{n+1}}t^{-1/2} \nabla_x H_t(x,y) \, dt,
\]
so that $R(x,y)=R^{(0)}(x,y)+\sum_{n \in \NN} K_n(x,y)$. Notice that
\[
\nabla_y K_n(x,y)=\frac{1}{\sqrt{\pi}}\int_{2^n}^{2^{n+1}}t^{-1/2} \nabla_y \nabla_x H_t(x,y) \, dt.
\]
So, by integrating in $t$ the estimates of Theorem \ref{L1estimateHtgradHt}, we immediately deduce the following statement.

\begin{lemma}\label{lemmahs}
The following estimates hold for every $n \in \NN$ and $\varepsilon \geq 0$: 
\begin{gather}
\label{1}
   \sup_{y \in \TT_q} \sum_{x \in \TT_{q+1}} |K_n(x,y)| \, e^{\varepsilon d(x,y)/2^{n/2}} \mu(x) \lesssim 1, \\
\label{2}
       \sup_{y \in \TT_q} \sum_{x \in \TT_{q+1}} |\nabla_y K_n(x,y)| \, e^{\varepsilon d(x,y)/2^{n/2}} \mu(x) \lesssim 2^{-n/2}.
\end{gather}
The implicit constants in the above estimates may depend on $\varepsilon$, but not on $q$.
\end{lemma}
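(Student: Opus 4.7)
The plan is to derive the estimates by swapping the sum in $x$ with the integral in $t$ in the definition of $K_n$ (respectively $\nabla_y K_n$), and then applying the pointwise-in-$t$ weighted $L^1$-bounds from Theorem \ref{L1estimateHtgradHt}. Since $K_n$ is supported on $t \in [2^n,2^{n+1}]$ with $n \in \NN$, the range of $t$ satisfies $t \geq 1$, so Theorem \ref{L1estimateHtgradHt} is applicable throughout.

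The first key step is to rewrite the exponential weight in a form compatible with Theorem \ref{L1estimateHtgradHt}. For $t \in [2^n,2^{n+1}]$ one has $\sqrt{t} \asymp 2^{n/2}$, so there is a constant $c>1$ (independent of $n$) such that $\varepsilon d(x,y)/2^{n/2} \leq c\varepsilon d(x,y)/\sqrt{t}$ for all $x,y$; in particular, $e^{\varepsilon d(x,y)/2^{n/2}} \leq e^{\varepsilon' d(x,y)/\sqrt{t}}$ with $\varepsilon' = c\varepsilon$. This trivial replacement is what allows us to invoke the $t$-dependent weighted bounds.

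For the first estimate \eqref{1}, applying the triangle inequality and Tonelli to interchange summation and integration gives
\[
\sum_{x \in \TT_{q+1}} |K_n(x,y)| \, e^{\varepsilon d(x,y)/2^{n/2}} \mu(x)
\lesssim \int_{2^n}^{2^{n+1}} t^{-1/2} \sum_{x \in \TT_{q+1}} |\nabla_x H_t(x,y)| \, e^{\varepsilon' d(x,y)/\sqrt{t}} \mu(x) \, dt,
\]
and the inner sum is bounded by $C_{\varepsilon'} t^{-1/2}$ uniformly in $y$ by \eqref{stimapesatanablaxHt}. The remaining integral is $\int_{2^n}^{2^{n+1}} t^{-1}\, dt = \log 2$, yielding a bound independent of $n$ and $q$, as required.

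For \eqref{2}, the same interchange applied to $\nabla_y K_n$ gives
\[
\sum_{x \in \TT_{q+1}} |\nabla_y K_n(x,y)| \, e^{\varepsilon d(x,y)/2^{n/2}} \mu(x)
\lesssim \int_{2^n}^{2^{n+1}} t^{-1/2} \sum_{x \in \TT_{q+1}} |\nabla_y \nabla_x H_t(x,y)| \, e^{\varepsilon' d(x,y)/\sqrt{t}} \mu(x) \, dt,
\]
and the inner sum is now bounded by $C_{\varepsilon'} t^{-1}$ by \eqref{stimanablaynablaxHt}. Thus the expression is controlled by $\int_{2^n}^{2^{n+1}} t^{-3/2}\, dt \asymp 2^{-n/2}$, giving the claimed estimate. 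Since no step in this argument depends on $q$ (all constants inherited from Theorem \ref{L1estimateHtgradHt} are $q$-independent), this yields the $q$-uniform bound. There is no substantive obstacle: the whole lemma is essentially a dyadic integration of the already-established heat kernel estimates.
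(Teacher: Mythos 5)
Your proposal is correct and is exactly the argument the paper intends: the paper deduces Lemma \ref{lemmahs} by ``integrating in $t$'' the estimates \eqref{stimapesatanablaxHt} and \eqref{stimanablaynablaxHt} of Theorem \ref{L1estimateHtgradHt} over the dyadic interval $[2^n,2^{n+1}]$, and your only added detail --- replacing $e^{\varepsilon d(x,y)/2^{n/2}}$ by $e^{\varepsilon' d(x,y)/\sqrt{t}}$ with $\varepsilon'=\sqrt{2}\,\varepsilon$ since $t\le 2^{n+1}$ --- is the same routine adjustment implicitly needed there.
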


We can finally complete the proof of the Riesz transform boundedness result.

\begin{proof}[Proof of Theorem \ref{t: teoRiesz}]
As $\opL = \frac{1}{2} \nabla^* \nabla$, clearly $\| \opL^{1/2} f \|_{L^2(\mu)} = 2^{-1/2} \| \nabla f \|_2$, thus $\Rz = \nabla \opL^{-1/2}$ is trivially bounded on $L^2(\mu)$. So, by Lemma \ref{lem:localriesz}, also $\Rz - \Rz^{(0)}$ is. By combining Lemma \ref{lemmahs} with Proposition \ref{lemmahebisch} and Remark \ref{rem:cz_gradient}, we obtain that $\Rz-\Rz^{(0)}$ is of weak type $(1,1)$, bounded on $L^p(\mu)$ for all $p \in (1,2]$, and bounded from $H^1(\mu)$ to $L^1(\mu)$. Since the same properties are true for $\Rz^{(0)}$ by Lemma \ref{lem:localriesz}, the desired boundedness of $\Rz$ follows.
\end{proof}

\section*{Final remarks}
  
In \cite{LSTV} the Calder\'on--Zygmund theory of \cite{hs}, as well as the Hardy and BMO spaces of \cite{ATV2, ATV1}, were generalized to a nonhomogeneous tree $T$ of bounded degree equipped with an arbitrary locally doubling flow $m$. The metric measure space $(T,d,m)$ is in general nondoubling and of exponential growth, although a global  Poincar\'e inequality holds, see \cite{LSTV2}. One can easily define the notion of flow gradient in this more general setting and introduce a natural flow Laplacian. Hence one could study the boundedness properties of the first order Riesz transform on $(T,d,m)$, following the strategy discussed in the section above. The main difficulty to face is the lack of explicit formulas for the heat kernel of a flow Laplacian and its gradient. Nevertheless it might be possible to transfer the weighted $L^1$-estimates for the heat kernel and its gradient from an appropriate homogeneous tree to the tree $T$ and consequently study the boundedness of the Riesz transform on $(T,d,m)$. This is actually a work in progress \cite{MSTV}.

\section*{Acknowledgments}
This work was partially supported by the Progetto ``Harmonic analysis on continuous and discrete structures''(bando Trapezio Compagnia San Paolo).
The first-named author gratefully acknowledges the support of Compagnia di San Paolo through a Starting Grant at Politecnico di Torino.
 The second- and third-named authors were also partially supported by the INdAM--GNAMPA 2022 Project ``Generalized Laplacians on continuous and discrete structures'' (CUP\_E55F22000270001).
The authors are members of the Gruppo Nazionale per l'Analisi Matema\-tica, la Probabilit\`a e le loro Applicazioni (GNAMPA) of the Istituto Nazionale di Alta Matematica (INdAM).

\end{document}